\numberwithin{equation}{section}
\newtheorem{theorem}{Theorem}[section]
\newtheorem{proposition}[theorem]{Proposition}
\newtheorem{corollary}[theorem]{Corollary}
\newtheorem{lemma}[theorem]{Lemma}
\newcommand{\mc}{\mathcal}
\DeclareMathOperator{\lk}{lk}
\newcommand{\p}{^{\prime}}
\newcommand{\R}{\mathbb{R}}
\newcommand{\cc}{^{(c)}}
\newcommand{\scc}{^{(sc)}}
\title{Lower Bounds for Cubical Pseudomanifolds}
\author{Steven Klee \\
\small Department of Mathematics\\[-0.8ex]
\small One Shields Avenue \\[-0.8ex]
\small University of California \\[-0.8ex]
\small  Davis, CA, 95616, USA\\[-0.8ex]
\small \texttt{klee@math.ucdavis.edu}}
\begin{document}

\maketitle

\begin{abstract}
It is verified that the number of vertices in a $d$-dimensional cubical pseudomanifold is at least $2^{d+1}$.  Using Adin's cubical $h$-vector,  the generalized lower bound conjecture is established for all cubical $4$-spheres, as well as for some special classes of cubical spheres in higher dimensions.
\end{abstract}


\section{Introduction} \label{intro} 

In this paper, we will be interested in studying combinatorial properties of cubical decompositions of certain topological spaces.  We will begin with a brief discussion of simplicial complexes, which will hopefully help motivate the results presented in the remainder of the paper.  One of the most commonly studied combinatorial invariants of a finite $(d-1)$-dimensional simplicial complex $\Delta$ is its $f$-vector $f(\Delta) = (f_{-1}(\Delta), f_0(\Delta), \ldots, f_{d-1}(\Delta))$ where $f_i(\Delta)$ denotes the number of $i$-dimensional faces of $\Delta$.  Equivalently, the $h$-numbers of $\Delta$ are defined by the relation $\sum_{j=0}^dh_j(\Delta)\lambda^{j} = \sum_{i=0}^df_{i-1}(\Delta) \lambda^i (1-\lambda)^{d-i}$.

For a $(d-1)$-dimensional semi-Eulerian simplicial complex $\Delta$ (for example, a triangulation of a manifold without boundary), Vic Klee's \cite{VK} Dehn-Sommerville equations establish a symmetry among the $h$-numbers of $\Delta$.  Specifically, $h_{d-i}(\Delta)-h_i(\Delta) = (-1)^i{d \choose i}(\widetilde{\chi}(\Delta)-\widetilde{\chi}(\mathbb{S}^{d-1}))$ for all $0 \leq i \leq d$.  In particular, when $\Delta$ is a homology sphere, the $h$-vector of $\Delta$ is symmetric. More generally, Novik and Swartz \cite{NS08} establish an analogue of Klee's Dehn-Sommerville equations for manifolds with boundary (see Theorem \ref{NS-DehnSommerville}).

The $g$-numbers of a $(d-1)$-dimensional simplicial complex $\Delta$ are defined as $g_i(\Delta):=h_i(\Delta)-h_{i-1}(\Delta)$ for all $0 \leq i \leq \lfloor\frac{d}{2}\rfloor$.  McMullen's $g$-conjecture \cite{Mc71}, now the celebrated $g$-theorem of Stanley \cite{S80} and Billera and Lee \cite{BL}, provides a complete characterization of the $h$-numbers of a simplicial polytope (see Theorem \ref{g-theorem}).  One consequence of the $g$-theorem is the following \textit{generalized lower bound theorem}: if $\mathcal{P}$ is a simplicial $d$-polytope, then $h_0(\mc{P}) \leq h_1(\mc{P}) \leq \cdots \leq h_{\lfloor \frac{d}{2}\rfloor}(\mc{P})$. 

It is natural to ask if the $g$-theorem continues to hold for the class of $(d-1)$-dimensional simplicial spheres.  Unfortunately, it is not even known if the generalized lower bound theorem continues to hold for this class of simplicial complexes.  It is known, however, that if $\Delta$ is a $(d-1)$-dimensional simplicial sphere, then $h_0(\Delta) \leq h_1(\Delta) \leq h_2(\Delta)$.  The first of these inequalities is trivial, while the second is equivalent the following simplicial lower bound theorem. 

Let $\varphi_i(n,d)$ denote the number of $i$-dimensional faces in a stacked simplicial $d$-polytope on $n$-vertices. The lower bound conjecture (LBC, for short) states that if $\Delta$ is a $d$-polytope on $n$ vertices, then $f_i(\Delta) \geq \varphi_i(n,d)$ for all $i$.  Barnette \cite{B73} proved that $h_2(\Delta) \geq h_1(\Delta)$ for any simplicial $d$-polytope $\Delta$ with $d \geq 3$.  The inequality $h_2(\Delta) \geq h_1(\Delta)$ is equivalent to the inequality $f_1(\Delta) \geq \varphi_1(n,d)$ when $\Delta$ has $n$ vertices; but, as Perles observed and Barnette \cite{B73} proved, the lower bound conjecture actually follows from this inequality.  More generally, Kalai \cite{K87} proved that when $d \geq 4$, any $(d-1)$-dimensional homology manifold without boundary is generically $d$-rigid and hence also satisfies $h_2(\Delta) \geq h_1(\Delta)$. In particular, the LBC holds for homology spheres and homology manifolds without boundary.

In addition to studying simplicial complexes, one may also study cubical complexes.  Where simplicial complexes can be decomposed into simplices of certain dimensions, cubical complexes are geometric objects that can be decomposed into cubes of certain dimensions (see Section \ref{notation} for a formal definition).  Let $\mc{K}$ be a $d$-dimensional cubical complex.  We compute the $f$-vector $(f_0(\mc{K}),\ldots,f_d(\mc{K}))$ of $\mc{K}$, where $f_i(\mc{K})$ denotes the number of $i$-dimensional faces of $\mc{K}$.  Adin \cite{A95} defines a cubical $h$-vector $h^{(c)}(\mc{K}) = (h_0^{(c)}(\mc{K}),h_1^{(c)}(\mc{K}),\ldots,h_{d+1}^{(c)}(\mc{K}))$ that is analogous to the $h$-vector of a simplicial complex.  He goes on to prove an analogue of the Dehn-Sommerville equations for semi-Eulerian cubical complexes (see Theorem \ref{adin-ds}).

As in the case of simplicial complexes, the $g$-numbers of a cubical complex  $\mc{K}$ are defined by the formula $g_i^{(c)}(\mc{K}) = h_i^{(c)}(\mc{K})-h_{i-1}^{(c)}(\mc{K})$.  Adin \cite[Question 2]{A95} poses the following generalized lower bound conjecture (GLBC) for cubical polytopes:  if $\mc{K}$ is a cubical $(d+1)$-polytope, is $g_i^{(c)}(\mc{K}) \geq 0$ for all $1 \leq i \leq \lfloor\frac{d+1}{2}\rfloor$?  

Blind and Blind \cite{BB90} study shellings of cubical polytopes to show that a cubical polytope always has a pair of disjoint facets.  This implies that a cubical $(d+1)$-polytope has at least $2^{d+1}$ vertices.  This result answers a conjecture of Kupitz \cite{K84} asserting that the  $(d+1)$-cube has the minimal $f$-vector among all cubical $(d+1)$-polytopes.  Jockusch \cite{J93} poses a lower bound conjecture for cubical polytopes.  As in the simplicial case, his conjecture is equivalent to showing that $g_2\cc(\mc{K}) \geq 0$ for cubical polytopes.

In this paper, we study $d$-dimensional cubical complexes $\mc{K}$ with the property that each codimension-one face of $\mc{K}$ is contained in exactly two facets.  In addition to the class of cubical polytopes, this includes the more general classes of cubical spheres and cubical manifolds without boundary.  The first goal of this paper is to prove that if $\mc{K}$ is such a cubical complex, then $f_0(\mc{K}) \geq 2^{d+1}$ (Theorem \ref{g1bound}).  This provides a new proof of the result of Blind and Blind \cite{BB90} for the more general class of cubical pseudomanifolds, which makes no reference to shellings.

From here, we study the GLBC for cubical polytopes and spheres.  We show that $g_2\cc(\mc{K}) \geq 0$ for any $4$-dimenisonal cubical sphere $\mc{K}$ (Theorem \ref{4dim}), and hence cubical $4$-spheres satisfy the GLBC.  Next, we show that the cubical GLBC is satisfied for a class of odd-dimensional cubical polytopes (Theorem \ref{g-thm-small-links}).  The second goal of the paper is to prove an analogue of the Dehn-Sommerville equations for cubical manifolds with boundary (Theorem \ref{cubicalDS}) that is analogous to a result of Novik and Swartz \cite{NS08} for simplicial manifolds with boundary.

The paper is structured as follows.  In Section \ref{notation}, we present the definitions and background material that will be relevant for the remainder of the paper.  In Section \ref{lowerbounds}, we study lower bounds for cubical complexes, proving Theorem \ref{g1bound}, along with some special cases of the cubical GLBC.  Section \ref{cubicalDSsection} is devoted to the proof of the cubical Dehn-Sommerville equations for manifolds with boundary.


\section{Definitions and Preliminary Results} \label{notation}
Let $C^n$ denote the standard cube $[0,1]^n$ in $\R^n$.  A \textit{cubical complex} $\mc{K}$ on vertex set $V$ is a collection of subsets of $V$, partially ordered by inclusion, satisfying the following properties:
\begin{enumerate}
\item $\mc{K}$ has a minimal element $\emptyset$.
\item For all $v \in V$, the singleton $\{v\} \in \mc{K}$.
\item For any nonempty $F \in \mc{K}$, the interval $[\emptyset,F] = \{G \in \mc{K}: \emptyset \subseteq G \subseteq F\}$ is isomorphic to the face poset of a cube of some dimension.
\item If $F,G \in \mc{K}$, then $F \cap G \in \mc{K}$.
\end{enumerate}

The elements $F \in \mc{K}$ are called \textit{faces}.  By convention, we use the notation $G \subset F$ to indicate that $G$ is a proper (possibly empty) subface of $F$ and $G \subseteq F$ to indicate that $G$ is a subface of $F$.  If $[\emptyset,F]$ is isomorphic to the face poset of a $k$-dimensional cube, we say that $F$ is a \textit{$k$-dimensional} face of $\mathcal{K}$ and write $\dim F = k$.  This makes $\mc{K}$ a graded poset when we declare that a $k$-dimensional face of $\mc{K}$ has rank $k+1$.  We define the dimension of $\mc{K}$ to be $\dim(\mc{K}) := \max\{\dim F: F \in \mc{K}\}$.  We say that $\mc{K}$ is \textit{pure} of dimension $d$ if each of its facets (maximal faces under inclusion) has dimension $d$.  If $\mc{K}$ is pure, we call a codimension-one face of $\mc{K}$ a \textit{ridge}.

The $\textit{link}$ of a face $F \in \mc{K}$ is $\lk_{\mc{K}}(F) = \{G \in \mc{K}: G \supseteq F\}.$  If $\mc{K}$ is a pure $d$-dimensional cubical complex, and $F \in \mc{K}$ is a nonempty face of dimension $k$, then $\lk_{\mc{K}}(F)$ is the face poset of a \textit{simplicial} complex of dimension $d-k-1$.

As in the case of simplicial complexes, we define the $f$-vector of a $d$-dimensional cubical complex $\mc{K}$ to be $f(\mc{K}) = \left(f_{-1}(\mc{K}),f_0(\mc{K}),f_1(\mc{K}), \ldots, f_d(\mc{K})\right)$ where $f_i(\mc{K})$ denotes the number of $i$-dimensional faces of $\mc{K}$.  Adin \cite{A95}, defines the \textit{short cubical $h$-vector} of $\mc{K}$ to be the vector $h\scc(\mc{K}) = (h_0\scc(\mc{K}),\ldots,h_d\scc(\mc{K}))$ whose entries are defind by the relation $$\sum_{j=0}^dh_j\scc(\mc{K})\lambda^j = \sum_{i=0}^df_{i}(\mc{K})(2\lambda)^i(1-\lambda)^{d-i}.$$  Equivalently, Hetyei \cite[Theorem 9]{A95} observed that $h_j\scc(\mc{K}) = \sum_{v\in\mc{K}}h_j(\lk_{\mc{K}}(v))$, where the sum is taken over all vertices $v \in \mc{K}$, and $h_j(\lk_{\mc{K}}(v))$ denotes the simplicial $h$-number of  $\lk_{\mc{K}}(v)$.  Having defined the short cubical $h$-vector, Adin defines the \textit{(long) cubical $h$-vector} of $\mc{K}$ to be the vector $h\cc(\mc{K}) = (h_0\cc(\mc{K}),\ldots,h_{d+1}\cc(\mc{K}))$ where $h_0\cc(\mc{K}) = 2^d$ and $h_{i+1}\cc(\mc{K})+h_i\cc(\mc{K}) = h_i\scc(\mc{K})$ for all $i\geq 0$.  For example, $h_1^{(c)}(\mc{K}) = f_0(\mc{K})-2^d$.  Since the link of a vertex in a cubical complex $\mc{K}$ is a  simplicial complex, our primary means of understanding $h\cc(\mc{K})$ is to study $h\scc(\mc{K})$ and employ the plethora of known results about $h$-numbers of simplicial complexes.

Following Goresky and MacPherson \cite{GM}, we say that a pure $(d-1)$-dimensional simplicial complex $\Delta$ is a \textit{pseudomanifold} if each ridge of $\Delta$ is contained in exactly two facets.  Similarly, a pure $d$-dimensional cubical complex $\mc{K}$ is a \textit{pseudomanifold} if each ridge of $\mc{K}$ is contained in exactly two facets.

The \textit{reduced Euler characteristic} of a simplicial (or cubical) complex $\Gamma$ is $\widetilde{\chi}(\Gamma) := \sum_{i=-1}^{\dim \Gamma}(-1)^if_i(\Gamma)$.  A pure $(d-1)$-dimensional simplicial complex $\Delta$ is called \textit{semi-Eulerian} if $\widetilde{\chi}(\lk_{\Delta}(F)) = \widetilde{\chi}(\mathbb{S}^{d-|F|-1})$ for all nonempty faces $F \in \Delta$.  Klee's Dehn-Sommerville equations \cite{VK} state that if $\Delta$ is a semi-Eulerian simplicial complex, then $h_{d-i}(\Delta)-h_i(\Delta) = (-1)^i{d \choose i}(\widetilde{\chi}(\Delta)-\widetilde{\chi}(\mathbb{S}^{d-1}))$ for all $0 \leq i \leq d$. Similarly, we say that a $d$-dimensional cubical complex $\mc{K}$ is \textit{semi-Eulerian} if $\widetilde{\chi}(\lk_{\mc{K}}(F)) = \widetilde{\chi}(\mathbb{S}^{d-\dim F-1})$ for all nonempty faces $F \in \mc{K}$, and that $\mc{K}$ is Eulerian if it is semi-Eulerian and $\widetilde{\chi}(\mc{K}) = \widetilde{\chi}(\mathbb{S}^d)$.

Adin \cite[Theorem 5(i)]{A95} proves that if $\mc{K}$ is a semi-Eulerian cubical complex of dimension $d$, then $h_i\scc(\mc{K}) = h_{d-i}\scc(\mc{K})$ for all $0 \leq i \leq d$.  The following \textit{cubical Dehn-Sommerville equations} follow from the relation $h_{j+1}\cc(\mc{K})+ h_{j}\cc(\mc{K}) = h_j\scc(\mc{K})$. 

\begin{theorem}\label{adin-ds}{\rm (essentially Adin \cite[Theorem 5(ii)]{A95})} 
Let  $\mc{K}$ be a semi-Eulerian cubical complex of dimension $d$.  Then 
\begin{equation}\label{dehn-sommerville}
h_{d+1-i}\cc(\mc{K})-h_i\cc(\mc{K}) = (-1)^i(-2)^d(\widetilde{\chi}(\mc{K}) - \widetilde{\chi}(\mathbb{S}^d)), 
\end{equation}
for all $0 \leq i \leq d+1$.
\end{theorem}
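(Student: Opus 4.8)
The plan is to derive \eqref{dehn-sommerville} formally from the two facts already recorded: Adin's symmetry $h_i\scc(\mc{K}) = h_{d-i}\scc(\mc{K})$ of the short cubical $h$-vector, and the defining relations $h_0\cc(\mc{K}) = 2^d$ and $h_{i+1}\cc(\mc{K}) + h_i\cc(\mc{K}) = h_i\scc(\mc{K})$ of the long cubical $h$-vector.

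First I would solve the recursion $h_k\cc(\mc{K}) = h_{k-1}\scc(\mc{K}) - h_{k-1}\cc(\mc{K})$. Unrolling it from the base value $h_0\cc(\mc{K}) = 2^d$ gives, for $0 \le k \le d+1$,
$$h_k\cc(\mc{K}) = (-1)^k 2^d + \sum_{j=0}^{k-1}(-1)^{k-1-j}\,h_j\scc(\mc{K}),$$
which is a one-line induction. Next, form the difference $h_{d+1-i}\cc(\mc{K}) - h_i\cc(\mc{K})$, substitute this formula for each term, and use $h_j\scc(\mc{K}) = h_{d-j}\scc(\mc{K})$ to reindex (by $j \mapsto d-j$) the alternating sum coming from $h_{d+1-i}\cc(\mc{K})$. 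The two partial alternating sums then fuse into the single sum $A := \sum_{j=0}^{d}(-1)^{j}\,h_j\scc(\mc{K})$ and the two powers of $2^d$ combine, leaving
$$h_{d+1-i}\cc(\mc{K}) - h_i\cc(\mc{K}) = (-1)^{i}\Big(\big[(-1)^{d+1} - 1\big]2^d + A\Big).$$

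To evaluate $A$, substitute $\lambda = -1$ into the defining identity $\sum_j h_j\scc(\mc{K})\lambda^j = \sum_i f_i(\mc{K})(2\lambda)^i(1-\lambda)^{d-i}$; this gives $A = \sum_i f_i(\mc{K})(-2)^i 2^{d-i} = 2^d\sum_{i=0}^{d}(-1)^i f_i(\mc{K}) = 2^d\big(\widetilde{\chi}(\mc{K}) + 1\big)$, using $f_{-1}(\mc{K}) = 1$ and the definition of the reduced Euler characteristic. Feeding this back in and using $(-1)^{d+1} = -(-1)^d$ together with $\widetilde{\chi}(\mathbb{S}^d) = (-1)^d$ collapses the right-hand side to $(-1)^{i}\,2^d\big(\widetilde{\chi}(\mc{K}) - \widetilde{\chi}(\mathbb{S}^d)\big)$.

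The one step I expect to need genuine care is the discrepancy between this $2^d$ and the $(-2)^d = (-1)^d 2^d$ appearing in \eqref{dehn-sommerville}: it remains to check that $\big(1-(-1)^d\big)\big(\widetilde{\chi}(\mc{K}) - \widetilde{\chi}(\mathbb{S}^d)\big) = 0$. For even $d$ this is immediate. For odd $d$ it is equivalent, via the formula $A = 2^d(\widetilde{\chi}(\mc{K})+1)$, to $\big(1-(-1)^d\big)A = 0$; and reindexing $j \mapsto d-j$ in $A$ and invoking $h_j\scc(\mc{K}) = h_{d-j}\scc(\mc{K})$ gives $A = (-1)^d A$, so $\big(1-(-1)^d\big)A = 0$ for any symmetric short cubical $h$-vector. (Concretely, for odd $d$ this forces $\widetilde{\chi}(\mc{K}) = -1 = \widetilde{\chi}(\mathbb{S}^d)$, whence both sides of \eqref{dehn-sommerville} vanish.) Everything else is routine manipulation of alternating sums, so this parity point is the only place a careless computation would fail.
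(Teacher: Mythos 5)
Your derivation is correct and follows exactly the route the paper indicates (and leaves to the reader): combine Adin's symmetry $h_i\scc(\mc{K}) = h_{d-i}\scc(\mc{K})$ with the recursion $h_{j+1}\cc(\mc{K}) + h_j\cc(\mc{K}) = h_j\scc(\mc{K})$ and the base value $h_0\cc(\mc{K}) = 2^d$. You also correctly resolve the one genuine subtlety the paper glosses over, namely that the derived coefficient $2^d$ agrees with the stated $(-2)^d$ because for odd $d$ the symmetry of $h\scc$ forces $\widetilde{\chi}(\mc{K}) = -1 = \widetilde{\chi}(\mathbb{S}^d)$, so both sides vanish.
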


We conclude this section with a statement of the $g$-theorem for simplicial polytopes, as it will be used in Section \ref{lowerbounds}.  For further information, we refer the reader to \cite{S96}.  Henceforth, if $\mathcal{P}$ is a simplicial (or cubical) $d$-polytope, we will use the notation $h_j(\mathcal{P})$ (or $h\scc_j(\mathcal{P})$, $ h\cc_j(\mathcal{P})$) to refer to the $h$-numbers (or cubical $h$-numbers) of the $(d-1)$-dimensional simplicial (or cubical) boundary complex of $\mathcal{P}$.  

Given nonnegative integers $\ell$ and $i$, it is possible to find nonnegative integers $n_i>n_{i-1}>\ldots>n_s\geq s \geq 1$ for which $\ell = {n_i \choose i} + {n_{i-1} \choose i-1} + \cdots + {n_s \choose s}$.  Given such an expansion, define $$\ell^{\langle i \rangle}:= {n_i+1 \choose i+1} + {n_{i-1} \choose i} + \cdots + {n_s+1 \choose s+1}.$$ Recall that if $\Delta$ is a $(d-1)$-dimensional simplicial complex, the \textit{$g$-numbers} of $\Delta$ are defined by $g_0(\Delta)=1$, and $g_i(\Delta) = h_i(\Delta)-h_{i-1}(\Delta)$ for all $1 \leq i \leq \lfloor \frac{d}{2}\rfloor$.  

\begin{theorem}\label{g-theorem}{\rm (Stanley \cite{S80}, Billera-Lee \cite{BL})}
A vector $h = (h_0, \ldots, h_d) \in \mathbb{Z}^{d+1}$ is the $h$-vector of a simplicial $d$-polytope if and only if
\begin{enumerate}
\item \label{g-theorem-1} $h_0=1$; 
\item \label{g-theorem-2} $h_i = h_{d-i}$ for all $i$; and 
\item \label{g-theorem-3} $(h_0,h_1-h_0,\cdots,h_{\lfloor \frac{d}{2}\rfloor}-h_{\lfloor \frac{d}{2}\rfloor-1}):=(1,g_1,\cdots,g_{\lfloor\frac{d}{2}\rfloor})$ satisfies $0 \leq g_{i+1}\leq g_i^{\langle i\rangle}$ for all $i \geq 1$.  
\end{enumerate}
\end{theorem}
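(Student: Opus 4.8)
The plan is to prove the two implications separately: the \emph{necessity} of conditions (\ref{g-theorem-1})--(\ref{g-theorem-3}) for the $h$-vector of a simplicial $d$-polytope (this is Stanley's half), and the \emph{sufficiency}, i.e.\ the construction of a simplicial $d$-polytope realizing any vector satisfying those conditions (this is the Billera--Lee half).

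For necessity, conditions (\ref{g-theorem-1}) and (\ref{g-theorem-2}) come essentially for free: $h_0=1$ always, and the symmetry $h_i=h_{d-i}$ is the classical Dehn--Sommerville relation for simplicial polytopes, a special case of Klee's equations quoted above since $\partial\mathcal{P}$ is a simplicial sphere and hence Eulerian with $\widetilde\chi(\partial\mathcal{P})=\widetilde\chi(\mathbb{S}^{d-1})$. The real content is (\ref{g-theorem-3}). I would first assume $\mathcal{P}$ is rational and pass to the projective toric variety $X$ attached to its (complete, simplicial) normal fan. The key input is the Danilov--Jurkiewicz description: the rational cohomology ring $A:=H^*(X;\mathbb{Q})$ is a standard graded Artinian algebra (the Stanley--Reisner ring of $\partial\mathcal{P}$ modulo a linear system of parameters) with $\dim_{\mathbb{Q}}A_i=h_i(\mathcal{P})$. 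Poincar\'e duality re-proves the symmetry, and the Hard Lefschetz theorem yields a class $\omega\in A_1$ for which $\omega^{d-2i}:A_i\to A_{d-i}$ is an isomorphism when $i\le d/2$; in particular $\omega:A_{i-1}\to A_i$ is injective there, so the standard graded quotient $\bar A:=A/\omega A$ has $\dim_{\mathbb{Q}}\bar A_i=h_i-h_{i-1}=g_i$ for $1\le i\le\lfloor d/2\rfloor$. Macaulay's characterization of Hilbert functions of standard graded algebras then forces $0\le g_{i+1}\le g_i^{\langle i\rangle}$, which is exactly condition (\ref{g-theorem-3}). To drop the rationality hypothesis I would perturb the vertices of $\mathcal{P}$ to rational coordinates, combinatorial type and hence $f$- and $h$-vectors being unchanged under small perturbations.

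For sufficiency, given an $M$-sequence $(1,g_1,\dots,g_{\lfloor d/2\rfloor})$ I would reconstitute the target vector by $h_i=g_0+\cdots+g_i$ for $i\le\lfloor d/2\rfloor$ and $h_{d-i}=h_i$, and then realize it following Billera--Lee: take the cyclic polytope $C(d+1,n)$ for a suitably large $n$, fix its standard line shelling of the boundary, and choose an initial segment of facets whose ``restriction sets'' under this shelling run through exactly the prescribed number of $i$-element sets in the appropriate (reverse-lex) order. The Kruskal--Katona/Macaulay combinatorics guarantees such a choice exists precisely because $(1,g_1,\dots)$ is an $M$-sequence, and the boundary of the resulting shellable $(d+1)$-ball is then a simplicial $d$-polytope whose $h$-vector is the one we wanted.

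The deep and genuinely hard step is the Hard Lefschetz property of $A$ --- equivalently, the positivity $g_i\ge 0$ together with the statement that a \emph{single} generic linear form witnesses all the surjections. Stanley's route invokes Hard Lefschetz for projective toric varieties, which rests on substantial machinery; alternative substitutes I could use are McMullen's proof via the polytope (weight) algebra, or the more recent combinatorial Hodge-theoretic arguments, but in each case this is the crux. Everything else --- the Dehn--Sommerville symmetry, the Macaulay upper bound, the rational perturbation, and the Billera--Lee construction --- is comparatively routine once this framework is in place.
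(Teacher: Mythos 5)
The paper does not prove this theorem at all: it is quoted as background, with the two halves attributed to Stanley \cite{S80} and Billera--Lee \cite{BL}, and is only \emph{used} later (in Theorem \ref{g-thm-small-links} and the subsequent proposition). So there is no in-paper argument to compare against; your outline should be judged against the cited literature, and there it is an accurate summary of the standard proofs. The necessity half (Danilov--Jurkiewicz presentation of $H^*(X;\mathbb{Q})$, Hard Lefschetz giving injectivity of multiplication by $\omega$ up to the middle degree, Macaulay's theorem applied to $A/\omega A$, and a rational perturbation to reduce to the rational case) is exactly Stanley's route, and you correctly identify Hard Lefschetz for the (generally singular, simplicial) toric variety as the genuinely hard input, with McMullen's polytope algebra as an alternative.

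Two small points on the sufficiency half. First, a dimension slip: the Billera--Lee complex is a shellable $d$-ball inside the boundary complex of the cyclic polytope $C(d+1,n)$, so its boundary is a $(d-1)$-sphere, which is then realized as the boundary complex of a simplicial $d$-polytope; calling it a ``$(d+1)$-ball'' is off by one. Second, and more substantively, the polytopality of that boundary sphere is not automatic from shellability --- a shellable sphere need not be polytopal --- and Billera and Lee must supply a separate geometric realization argument (choosing points on the moment curve and exhibiting the sphere as the boundary of an explicit polytope). Your sketch elides this step; at the level of an outline that is acceptable, but it should be flagged as a second nontrivial ingredient alongside Hard Lefschetz rather than folded into ``comparatively routine'' combinatorics.
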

Any vector satisfying the conditions of Theorem \ref{g-theorem}.\ref{g-theorem-3} is called an \textit{M-vector} (see \cite[Theorem II.2.2]{S96}).


\section{Lower Bounds} \label{lowerbounds}

Our first goal is to establish that the number of vertices in a $d$-dimensional cubical pseudomanifold is at least $2^{d+1}$.  We begin with a lemma.

\begin{lemma} \label{bound_f_nums}
Let $\mathcal{K}$ be a $d$-dimensional cubical complex with $d \geq 1$.  Then
\begin{displaymath}
  \sum_{i=0}^d2^if_i(\mc{K}) \leq \left(f_0(\mc{K})\right)^2.
\end{displaymath}
\end{lemma}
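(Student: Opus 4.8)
The plan is to read both sides of the inequality as cardinalities of explicit sets and then exhibit an injection from the smaller-looking set into the larger one. For the left-hand side, recall that for each face $F\in\mc{K}$ the interval $[\emptyset,F]$ is isomorphic to the face poset of a $(\dim F)$-cube, so $F$ has exactly $2^{\dim F}$ vertices. Hence $\sum_{i=0}^d 2^i f_i(\mc{K})$ is precisely the number of incident pairs $(F,v)$ where $F$ is a nonempty face of $\mc{K}$ and $v$ is a vertex of $F$. The right-hand side $f_0(\mc{K})^2$ is the number of ordered pairs of vertices of $\mc{K}$. So it suffices to build an injection $\Phi$ from the set of incident pairs $(F,v)$ into $V\times V$.

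The map I would use is $\Phi(F,v)=(v,\bar v_F)$, where $\bar v_F$ is the vertex of $F$ antipodal to $v$ inside the cube $[\emptyset,F]$: under any poset isomorphism $[\emptyset,F]\cong$ face poset of $C^m$ (with $m=\dim F$) the vertex $v$ corresponds to some $a\in\{0,1\}^m$ and we take the vertex corresponding to $(1-a_1,\dots,1-a_m)$. This is well defined because every automorphism of the face poset of a cube preserves antipodality of vertices, and clearly $\bar v_F\in V$ (when $m=0$ we simply get $\bar v_F=v$). With this definition $\Phi$ lands in $V\times V$, and the domain of $\Phi$ has size $\sum_{i=0}^d 2^i f_i(\mc{K})$, so the lemma will follow once $\Phi$ is shown to be injective.

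The injectivity is where the cubical-complex axioms actually get used, and it reduces to the claim that \emph{a face $F\in\mc{K}$ is determined by any one of its vertices $v$ together with the antipode $\bar v_F$}. Indeed, suppose $F,F'\in\mc{K}$ both contain $v$ and satisfy $\bar v_F=\bar v_{F'}=w$. Then $\{v,w\}\subseteq F\cap F'$, and $F\cap F'\in\mc{K}$ by the intersection axiom; since $F\cap F'\subseteq F$, it is a subface of the cube $[\emptyset,F]$ that contains both $v$ and its antipode $w$. But in a cube the only subface containing an antipodal pair of vertices is the whole cube (a proper subface fixes at least one coordinate, on which $v$ and $w$ disagree). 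Hence $F\cap F'=F$, so $F\subseteq F'$, and by symmetry $F=F'$; thus $\Phi(F,v)=\Phi(F',v')=(v,w)$ forces $v=v'$ and $F=F'$, giving injectivity. Combining the three steps, $\sum_{i=0}^d 2^i f_i(\mc{K})=\#\{(F,v)\}\le |V\times V|=f_0(\mc{K})^2$.

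I expect the only real (and still short) obstacle to be pinning down the "unique carrier" claim cleanly: verifying that inside a single cube an antipodal pair of vertices is contained in no proper subface, and that this statement is independent of the chosen cube-poset isomorphism, so that it transfers through the intersection axiom to all of $\mc{K}$. The two counting identities are routine double counts and require no hypothesis on $\mc{K}$ beyond being a cubical complex (the assumption $d\ge 1$ is only needed to avoid the degenerate empty case).
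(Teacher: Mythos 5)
Your proof is correct and is essentially the paper's argument: the paper counts the $2^{i-1}$ unordered antipodal vertex pairs of each $i$-face and notes that such a pair has the face as its unique least upper bound (giving $\sum_{i\geq 1}2^{i-1}f_i \leq \binom{f_0}{2}$), while you count ordered pairs $(v,\bar v_F)$ and prove injectivity via the intersection axiom — the same key fact that an antipodal pair determines its face. The difference between ordered and unordered pairs is purely cosmetic.
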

\begin{proof}
Let $F \in \mc{K}$ be an $i$-dimensional face with $1 \leq i \leq d$.  There are $2^{i-1}$ distinct pairs of antipodal vertices $v,v\p \in F$ whose least upper bound in $\mc{K}$ is $F$.  On the other hand, any two distinct vertices $u,u\p \in \mc{K}$ that have an upper bound in $\mc{K}$ have a unique least upper bound in $\mc{K}$.  Thus
\begin{displaymath}
\sum_{i=1}^d2^{i-1}f_i(\mc{K}) \leq {f_0(\mc{K}) \choose 2},
\end{displaymath}
which gives the desired inequality.
\end{proof}

This seemingly innocent lemma has the following consequence, which generalizes a result of Blind and Blind \cite[Theorem 2]{BB90}.

\begin{theorem} \label{g1bound}
Let $\mc{K}$ be a $d$-dimensional cubical pseudomanifold with $d \geq 2$.  Then $f_0(\mc{K}) \geq 2^{d+1}$.
\end{theorem}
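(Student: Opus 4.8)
The plan is to bound $\sum_{i=0}^d 2^if_i(\mc K)$ from below in terms of $f_0(\mc K)$ by passing to the vertex links of $\mc K$, and then to feed the result into Lemma~\ref{bound_f_nums}. The first step is the double-counting identity
\[
\sum_{i=0}^d 2^i f_i(\mc K)\;=\;\sum_{v}\;\sum_{j=-1}^{d-1} f_j\!\bigl(\lk_{\mc K}(v)\bigr),
\]
where $v$ ranges over the vertices of $\mc K$. Indeed, for $i\ge 1$ the number $2^if_i(\mc K)$ counts the incident pairs $(v,F)$ with $F$ an $i$-face and $v$ one of its $2^i$ vertices; grouping by $v$ and using that the $i$-faces of $\mc K$ through $v$ are exactly the $(i-1)$-faces of the simplicial complex $\lk_{\mc K}(v)$ turns this into $\sum_v f_{i-1}(\lk_{\mc K}(v))$, while the $i=0$ term is $f_0(\mc K)=\sum_v f_{-1}(\lk_{\mc K}(v))$. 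I would also note that each $\lk_{\mc K}(v)$ is a $(d-1)$-dimensional simplicial pseudomanifold: it is pure of dimension $d-1$ because $\mc K$ is pure of dimension $d$, and a ridge of $\lk_{\mc K}(v)$ corresponds to a ridge of $\mc K$ through $v$, which lies in exactly two facets of $\mc K$, both of which contain $v$.

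The combinatorial core is the claim that \emph{every $(d-1)$-dimensional simplicial pseudomanifold $\Delta$ satisfies $\sum_{j=-1}^{d-1} f_j(\Delta)\ge 2^{d+1}-1$, with equality if and only if $\Delta$ is the boundary complex $\partial\Delta^d$ of the $d$-simplex.} To prove it I would fix a facet $\sigma$ of $\Delta$ (a $(d-1)$-simplex, with $d$ vertices and $d$ ridges), and for each ridge $R\subset\sigma$ let $\tau_R$ be the unique facet of $\Delta$ other than $\sigma$ containing $R$, written $\tau_R=R\cup\{w_R\}$ with $w_R\notin\sigma$; the $\tau_R$ are pairwise distinct, since $\tau_R=\tau_{R'}$ would force $\tau_R\supseteq R\cup R'=\sigma$ and hence $\tau_R=\sigma$. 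If the $w_R$ are not all equal, choose ridges $R,R'$ with $w_R\ne w_{R'}$: the three families of faces of $\Delta$ consisting of the subsets of $\sigma$, the subsets of $\tau_R$ containing $w_R$, and the subsets of $\tau_{R'}$ containing $w_{R'}$ are pairwise disjoint (membership of $w_R,w_{R'}$ distinguishes them) and have sizes $2^d,\,2^{d-1},\,2^{d-1}$, so $\sum_j f_j(\Delta)\ge 2^{d+1}$. If instead all $w_R$ equal a single vertex $w$, then $\sigma$ together with the $\tau_R$ are precisely the $d+1$ facets of the copy of $\partial\Delta^d$ on the vertex set $\sigma\cup\{w\}$, so $\Delta$ contains $\partial\Delta^d$ as a subcomplex; hence $\sum_j f_j(\Delta)\ge 2^{d+1}-1$, with equality only when $\Delta=\partial\Delta^d$.

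Combining the two previous paragraphs gives $\sum_{i=0}^d 2^i f_i(\mc K)\ge (2^{d+1}-1)f_0(\mc K)$, and Lemma~\ref{bound_f_nums} then yields $(2^{d+1}-1)f_0(\mc K)\le f_0(\mc K)^2$, i.e. $f_0(\mc K)\ge 2^{d+1}-1$. To sharpen this by one, I would suppose $f_0(\mc K)=2^{d+1}-1$ and derive a contradiction. In that case all of the above inequalities are equalities, so $\sum_j f_j(\lk_{\mc K}(v))=2^{d+1}-1$ for every vertex $v$, and by the equality case of the combinatorial claim $\lk_{\mc K}(v)=\partial\Delta^d$ for every $v$. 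In particular each vertex of $\mc K$ lies in exactly $f_{d-1}(\partial\Delta^d)=d+1$ facets of $\mc K$, so counting incident (vertex, facet) pairs gives $2^d f_d(\mc K)=(d+1)f_0(\mc K)=(d+1)(2^{d+1}-1)$. Since $2^{d+1}-1$ is odd, this forces $2^d\mid d+1$, which is impossible for $d\ge 2$ since $0<d+1<2^d$. Therefore $f_0(\mc K)\ge 2^{d+1}$.

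The delicate point is obtaining the sharp constant $2^{d+1}$ rather than $2^{d+1}-1$: this rests on the equality case of the combinatorial claim together with the elementary divisibility remark, the configuration $f_0=2^{d+1}-1$ being exactly the "integrality obstruction" to $\partial C^{d+1}$ shrinking by one vertex. Everything else — the double count, the non-sharp part of the combinatorial claim, and the appeal to Lemma~\ref{bound_f_nums} — is routine.
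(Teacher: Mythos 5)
Your proof is correct, and it shares the paper's overall architecture --- the double count $\sum_i 2^i f_i(\mc{K}) = \sum_v \sum_j f_j(\lk_{\mc{K}}(v))$, the reduction to Lemma \ref{bound_f_nums}, and the final divisibility contradiction at $f_0 = 2^{d+1}-1$ are all exactly the paper's steps --- but the middle step is genuinely different. Where you prove the aggregate bound $\sum_{j=-1}^{d-1} f_j(\Delta) \geq 2^{d+1}-1$ for a $(d-1)$-dimensional simplicial pseudomanifold $\Delta$ by an explicit hands-on count (one facet $\sigma$, its $d$ neighboring facets $\tau_R$, and a case split on whether the opposite vertices $w_R$ coincide), the paper instead observes that each vertex link, being a pseudomanifold, has at least $d+1$ facets, and then invokes the Kruskal--Katona theorem to get the termwise bounds $f_{i-1}(\lk_{\mc{K}}(v)) \geq {d+1 \choose i}$, which sum to the same $2^{d+1}-1$. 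Your route is more elementary and self-contained, and it delivers the equality case ($\lk_{\mc{K}}(v) = \partial\Delta^d$, hence exactly $d+1$ facets through $v$) as a byproduct of the same argument rather than from the equality statement of Kruskal--Katona; the paper's route is shorter on the page and yields the termwise face-number bounds that it reuses immediately afterward in the corollary $f_i(\mc{K}) \geq {d+1 \choose i}2^{d+1-i}$, which your aggregate inequality alone would not give. Both arguments need $d \geq 2$ at the same two points: to have two distinct ridges of $\sigma$ (respectively, to make the Kruskal--Katona step meaningful) and to make $2^d \nmid d+1$ in the final contradiction.
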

\begin{proof}
Observe that $2^if_i(\mc{K}) = \sum_{v \in \mc{K}}f_{i-1}(\lk_{\mc{K}}(v))$ for any $1 \leq i \leq d$.  Indeed, both quantities count the number of pairs $(v,F)$ such that $F \in \mc{K}$ is an $i$-dimensional face containing the vertex $v$.  Since $\mc{K}$ is a pseudomanifold, each vertex of $\mc{K}$ is contained in at least $d+1$ facets of $\mc{K}$.  Hence $f_{i-1}(\lk_{\mc{K}}(v)) \geq {d+1 \choose i}$ for all $0 \leq i \leq d$ by the Kruskal-Katona theorem \cite[Theorem II.2.1]{S96}; and moreover, equality holds if and only if the vertex $v$ is contained in exactly $d+1$ facets.  Thus
\begin{equation}\label{eq2}
\sum_{i=0}^d 2^if_i(\mc{K}) = \sum_{v \in \mc{K}} \sum_{i=0}^d f_{i-1}(\lk(v)) \geq \sum_{v \in \mc{K}} \sum_{i=0}^d {d+1 \choose i} = f_0(\mc{K})\cdot(2^{d+1}-1),
\end{equation}
and hence $f_0(\mc{K}) \geq 2^{d+1}-1$ by Lemma \ref{bound_f_nums}.  

Suppose by way of contradiction that $f_0(\mc{K}) = 2^{d+1}-1$.  Then equality holds at each step of Equation \eqref{eq2}, and each vertex $v \in \mc{K}$ is contained in exactly $d+1$ facets of $\mc{K}$. This gives $$2^df_d(\mc{K}) = \sum_{v \in \mc{K}} f_{d-1}(\lk_{\mc{K}}(v)) = (d+1)f_0(\mc{K}) = (d+1)(2^{d+1}-1).$$  When $d \geq 2$,  $2^d$ divides the left hand side of the above equation but not the right side, a contradiction.  Thus $f_0(\mc{K}) \geq 2^{d+1}$, as desired.
\end{proof}

In other words, Theorem \ref{g1bound} establishes that $h_1\cc(\mc{K}) \geq h_0\cc(\mc{K})$ for cubical pseudomanifolds.  As a corollary, we remark that $f$-vectors of $d$-dimensional cubical pseudomanifolds are minimized by the boundary complex of the $(d+1)$-cube.

\begin{corollary}
Let $\mc{K}$ be a $d$-dimensional cubical pseudomanifold with $d \geq 2$. Then for all $0\leq i\leq d$,
\begin{displaymath}
f_i(\mc{K}) \geq {d+1 \choose i}2^{d+1-i}.
\end{displaymath}
\end{corollary}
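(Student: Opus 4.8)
The plan is to bootstrap the inequality $f_0(\mc{K}) \geq 2^{d+1}$ of Theorem \ref{g1bound} using exactly the vertex-link counting that went into its proof. The case $i = 0$ is precisely Theorem \ref{g1bound}, since $\binom{d+1}{0}2^{d+1} = 2^{d+1}$, so I would dispose of it in one line and then assume $1 \leq i \leq d$.

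For $1 \leq i \leq d$, I would start from the identity $2^i f_i(\mc{K}) = \sum_{v \in \mc{K}} f_{i-1}(\lk_{\mc{K}}(v))$, both sides counting pairs $(v,F)$ with $F$ an $i$-dimensional face containing the vertex $v$ — the same observation already recorded in the proof of Theorem \ref{g1bound}, now used for a general $i$ rather than only for $i = d$. Since $\mc{K}$ is a $d$-dimensional pseudomanifold, every vertex lies in at least $d+1$ facets, so the $(d-1)$-dimensional simplicial complex $\lk_{\mc{K}}(v)$ has at least $d+1$ faces of dimension $d-1$; the Kruskal--Katona theorem then yields $f_{i-1}(\lk_{\mc{K}}(v)) \geq \binom{d+1}{i}$ for every vertex $v$, the extremal configuration being the boundary complex of the $d$-simplex. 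This is again exactly the bound invoked in the proof of Theorem \ref{g1bound}.

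Combining these two facts gives
\[
2^i f_i(\mc{K}) \;=\; \sum_{v \in \mc{K}} f_{i-1}(\lk_{\mc{K}}(v)) \;\geq\; \binom{d+1}{i}\, f_0(\mc{K}).
\]
At this point I would apply Theorem \ref{g1bound} to replace $f_0(\mc{K})$ by $2^{d+1}$ and divide through by $2^i$, obtaining $f_i(\mc{K}) \geq \binom{d+1}{i}\, 2^{d+1-i}$, which is the claim.

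There is no real obstacle here beyond assembling ingredients that are already in place: Theorem \ref{g1bound} supplies the vertex count, and the link identity together with the Kruskal--Katona estimate are lifted verbatim from its proof. The only point deserving a moment's attention is the index bookkeeping in the Kruskal--Katona step — confirming that "$v$ lies in at least $d+1$ facets of $\mc{K}$" is equivalent to "$\lk_{\mc{K}}(v)$ has at least $d+1$ faces of dimension $d-1$", so that the correct extremal complex (the boundary of the $d$-simplex, with $\binom{d+1}{i}$ faces of dimension $i-1$) is the one that drives the bound.
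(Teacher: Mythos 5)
Your proof is correct and follows essentially the same route as the paper: the identity $2^i f_i(\mc{K}) = \sum_{v} f_{i-1}(\lk_{\mc{K}}(v))$, the Kruskal--Katona bound $f_{i-1}(\lk_{\mc{K}}(v)) \geq \binom{d+1}{i}$, and the vertex bound $f_0(\mc{K}) \geq 2^{d+1}$ from Theorem \ref{g1bound}. No issues.
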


\begin{proof}
Since $f_{i-1}(\lk_{\mc{K}}(v)) \geq {d+1 \choose i}$, it follows that
\begin{displaymath}
2^if_i(\mc{K}) = \sum_{v \in K} f_{i-1}(\lk_{\mc{K}}(v)) \geq 2^{d+1}{d+1 \choose i}.
\end{displaymath}
\end{proof}

Jockusch \cite{J93} poses a lower bound conjecture for cubical polytopes.  As in the case of simplicial polytopes, the cubical lower bound conjecture states that $h_1\cc(\mc{K}) \leq h_2\cc(\mc{K})$ when $\mc{K}$ is a cubical $(d+1)$-polytope and $d \geq 3$.  When $n$, the number of vertices in $\mc{K}$, is divisible by $2^d$, this says that a stacked cubical $(d+1)$-polytope on $n$ vertices has the minimal $f$-numbers among all cubical  $(d+1)$-polytopes on $n$ vertices.  A stacked cubical polytope on $n$ vertices is constructed by stacking an appropriate number of combinatorial $(d+1)$-cubes on top of one another.  In the remainder of this section, we verify that the cubical GLBC holds in some special cases.  We begin by proving this conjecture for all $4$-dimensional cubical spheres (see Theorem \ref{4dim}).

\begin{lemma}\label{cubical_shortcubical_g-s}
Let $\mc{K}$ be a $d$-dimensional cubical complex.  For all $0 \leq i \leq d$,
$$h_{i+1}\cc(\mc{K}) = (-1)^{i+1}h_0\cc(\mc{K})+\sum_{j=0}^{i}(-1)^{i-j}h_{j}\scc(\mc{K}).$$  In particular, 
\begin{equation}\label{lem35}
h_{i+1}\cc(\mc{K})-h_{i-1}\cc(\mc{K}) = h_i\scc(\mc{K})-h_{i-1}\scc(\mc{K}),
\end{equation}
for all $1 \leq i \leq d$.
\end{lemma}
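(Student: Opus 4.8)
The plan is to prove the displayed closed form by induction on $i$, using only the defining recursion for the long cubical $h$-vector, namely $h_0\cc(\mc{K}) = 2^d$ and $h_{i+1}\cc(\mc{K}) + h_i\cc(\mc{K}) = h_i\scc(\mc{K})$ for all $i \geq 0$. The point is that the recursion can be rewritten as $h_{i+1}\cc(\mc{K}) = h_i\scc(\mc{K}) - h_i\cc(\mc{K})$, which expresses each long cubical $h$-number in terms of the previous one and a short cubical $h$-number, so iterating it telescopes into an alternating sum.

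Concretely, for the base case $i=0$ the recursion gives $h_1\cc(\mc{K}) = h_0\scc(\mc{K}) - h_0\cc(\mc{K})$, which is exactly $(-1)^{1}h_0\cc(\mc{K}) + \sum_{j=0}^{0}(-1)^{-j}h_j\scc(\mc{K})$. For the inductive step, assuming the formula for $h_{i+1}\cc(\mc{K})$, one substitutes it into $h_{i+2}\cc(\mc{K}) = h_{i+1}\scc(\mc{K}) - h_{i+1}\cc(\mc{K})$; negating the inductive expression flips every sign, i.e. replaces $(-1)^{i+1}$ by $(-1)^{i+2}$ and $(-1)^{i-j}$ by $(-1)^{i+1-j}$, and the extra term $h_{i+1}\scc(\mc{K}) = (-1)^{(i+1)-(i+1)}h_{i+1}\scc(\mc{K})$ is precisely the missing $j=i+1$ summand, yielding the formula with $i$ replaced by $i+1$. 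This is pure sign-bookkeeping, so no real obstacle arises; the only care needed is to keep the index range $0 \leq i \leq d$ consistent with the range in which $h_i\scc(\mc{K})$ is defined.

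For the ``in particular'' assertion \eqref{lem35}, I would not even invoke the closed form: writing the defining recursion at index $i$ and at index $i-1$ gives $h_{i+1}\cc(\mc{K}) + h_i\cc(\mc{K}) = h_i\scc(\mc{K})$ and $h_i\cc(\mc{K}) + h_{i-1}\cc(\mc{K}) = h_{i-1}\scc(\mc{K})$, and subtracting these two identities cancels the common term $h_i\cc(\mc{K})$ and produces $h_{i+1}\cc(\mc{K}) - h_{i-1}\cc(\mc{K}) = h_i\scc(\mc{K}) - h_{i-1}\scc(\mc{K})$ for all $1 \leq i \leq d$. (Alternatively, one subtracts the two instances of the closed form at $i$ and $i-1$; since $(-1)^{i+1} = (-1)^{i-1}$ the $h_0\cc(\mc{K})$ terms cancel, and since $(-1)^{i-j} = (-1)^{i-2-j}$ all summands with $j \leq i-2$ cancel, leaving exactly $h_i\scc(\mc{K}) - h_{i-1}\scc(\mc{K})$.) Either way the computation is immediate.
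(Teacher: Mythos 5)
Your proof is correct and follows essentially the same route as the paper: induction on $i$ using the defining recursion $h_i\scc(\mc{K}) = h_i\cc(\mc{K}) + h_{i+1}\cc(\mc{K})$, with the ``in particular'' statement obtained by subtracting two instances of that recursion. No issues.
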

\begin{proof}
We use the relation $h_i\scc(\mc{K}) = h_i\cc(\mc{K}) + h_{i+1}\cc(\mc{K})$ and induction on $i$.  The result is immediate when $i=0$.  For $i\geq1$, using the inductive hypothesis we obtain
\begin{eqnarray*}
h_{i+1}\cc(\mc{K}) &=&h_i\scc(\mc{K}) - h_i\cc(\mc{K})  \\
&=& h_i\scc(\mc{K}) + (-1)^{i+1}h_0\cc(\mc{K}) + \sum_{j=0}^{i-1}(-1)^{i-j}h_j\scc(\mc{K})  \\
&=& (-1)^{i+1}h_0\cc(\mc{K}) + \sum_{j=0}^i(-1)^{i-j}h_j\scc(\mc{K}).
\end{eqnarray*}
\end{proof}

\begin{corollary} \label{even-stacked}
Let $\mc{K}$ be a $d=2k$-dimensional Eulerian cubical complex with the property that $\lk_{\mc{K}}(v)$ is a stacked simplicial sphere for all vertices $v \in \mc{K}$.  Then $h_1\cc(\mc{K}) = h_2\cc(\mc{K}) = \cdots = h_d\cc(\mc{K}).$
\end{corollary}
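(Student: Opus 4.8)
The plan is to read the long cubical $h$-vector of $\mc{K}$ off the short cubical $h$-vector, which in turn is controlled vertex-by-vertex by the links. First I would recall that a stacked simplicial $(d-1)$-sphere is the boundary complex of a stacked $d$-polytope, so its $h$-vector has the form $(1,m,m,\ldots,m,1)$; that is, $h_0(\Delta)=h_d(\Delta)=1$ and $h_1(\Delta)=h_2(\Delta)=\cdots=h_{d-1}(\Delta)$. (This follows from $g_i(\Delta)=0$ for $i\ge 2$ together with the Dehn--Sommerville symmetry $h_i(\Delta)=h_{d-i}(\Delta)$.) Applying Hetyei's identity $h_j\scc(\mc{K})=\sum_{v\in\mc{K}}h_j(\lk_{\mc{K}}(v))$ term by term, I would conclude that $h_1\scc(\mc{K})=h_2\scc(\mc{K})=\cdots=h_{d-1}\scc(\mc{K})$; denote this common value by $S$.

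Next I would invoke \eqref{lem35} of Lemma \ref{cubical_shortcubical_g-s}: $h_{i+1}\cc(\mc{K})-h_{i-1}\cc(\mc{K})=h_i\scc(\mc{K})-h_{i-1}\scc(\mc{K})$ for $1\le i\le d$. For $2\le i\le d-1$ both $h_i\scc(\mc{K})$ and $h_{i-1}\scc(\mc{K})$ equal $S$, so the right-hand side vanishes and $h_{i+1}\cc(\mc{K})=h_{i-1}\cc(\mc{K})$. Since $d=2k$ is even, the odd indices in $\{1,\ldots,d\}$ are $1,3,\ldots,d-1$ and the even ones are $2,4,\ldots,d$, and these relations chain them together into $h_1\cc(\mc{K})=h_3\cc(\mc{K})=\cdots=h_{d-1}\cc(\mc{K})$ and $h_2\cc(\mc{K})=h_4\cc(\mc{K})=\cdots=h_d\cc(\mc{K})$. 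Finally, because $\mc{K}$ is Eulerian we have $\widetilde{\chi}(\mc{K})=\widetilde{\chi}(\mathbb{S}^d)$, so Theorem \ref{adin-ds} supplies the genuine symmetry $h_{d+1-i}\cc(\mc{K})=h_i\cc(\mc{K})$; taking $i=1$ gives $h_d\cc(\mc{K})=h_1\cc(\mc{K})$, which merges the two chains and yields $h_1\cc(\mc{K})=h_2\cc(\mc{K})=\cdots=h_d\cc(\mc{K})$.

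I do not anticipate a genuine obstacle; the content lies entirely in organizing the three inputs above. The one point needing care is the range of validity of the step based on \eqref{lem35}: it produces nothing when $d=2$, but there the two ``chains'' are the singletons $\{h_1\cc(\mc{K})\}$ and $\{h_2\cc(\mc{K})\}$ and the Dehn--Sommerville symmetry alone finishes the job. More substantively, it is the evenness of $d$ that is being used: it places the index $d$ in the even chain and the index $1$ in the odd chain, so that the relation $h_d\cc(\mc{K})=h_1\cc(\mc{K})$ genuinely identifies the two chains rather than being vacuous. For odd $d$ this argument collapses, which is why the odd-dimensional situation requires a separate treatment (Theorem \ref{g-thm-small-links}).
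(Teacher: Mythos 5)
Your proof is correct and follows essentially the same route as the paper: stacked links give $h_1\scc=\cdots=h_{d-1}\scc$, Equation \eqref{lem35} then yields the two parity chains, and the Eulerian Dehn--Sommerville symmetry merges them. The only (immaterial) difference is that you invoke the symmetry at $i=1$ to get $h_d\cc=h_1\cc$, whereas the paper uses $i=k$ to get $h_{k+1}\cc=h_k\cc$; both close the argument.
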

\begin{proof}
Since $\lk_{\mc{K}}(v)$ is stacked for all vertices $v \in \mc{K}$, it follows that $h_1(\lk_{\mc{K}}(v)) = h_2(\lk_{\mc{K}}(v)) = \cdots = h_{d-1}(\lk_{\mc{K}}(v))$ for all $v \in \mc{K}$, and hence that $h_1\scc(\mc{K}) = \cdots = h_{d-1}\scc(\mc{K})$.  Thus Equation \eqref{lem35}, implies that $h_{i+1}\cc(\mc{K}) = h_{i-1}\cc(\mc{K})$ for all $2 \leq i \leq d-1$.  Since $\mc{K}$ is Eulerian, $h_{k+1}\cc(\mc{K}) = h_k\cc(\mc{K})$ by the cubical Dehn-Sommerville equations \eqref{dehn-sommerville}, and the statement follows.
\end{proof}

\begin{theorem} \label{4dim}
Let $\mc{K}$ be a $4$-dimensional cubical sphere.  Then $g_2\cc(\mc{K})~\geq~0$.
\end{theorem}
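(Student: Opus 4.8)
The plan is to rewrite $g_2\cc(\mc{K})$ in terms of the short cubical $h$-vector and then appeal to the simplicial lower bound theorem applied to the links of vertices. Since $\mc{K}$ is a cubical $4$-sphere it is Eulerian with $\widetilde{\chi}(\mc{K}) = \widetilde{\chi}(\mathbb{S}^4)$, so the case $i=2$ of the cubical Dehn--Sommerville equations \eqref{dehn-sommerville} (with $d = 4$) gives $h_3\cc(\mc{K}) = h_2\cc(\mc{K})$. On the other hand, the case $i = 2$ of Equation \eqref{lem35} in Lemma \ref{cubical_shortcubical_g-s} reads $h_3\cc(\mc{K}) - h_1\cc(\mc{K}) = h_2\scc(\mc{K}) - h_1\scc(\mc{K})$. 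Combining these two identities,
$$g_2\cc(\mc{K}) = h_2\cc(\mc{K}) - h_1\cc(\mc{K}) = h_3\cc(\mc{K}) - h_1\cc(\mc{K}) = h_2\scc(\mc{K}) - h_1\scc(\mc{K}).$$

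Next I would use Hetyei's identity $h_j\scc(\mc{K}) = \sum_{v} h_j(\lk_{\mc{K}}(v))$, the sum ranging over the vertices $v$ of $\mc{K}$, to obtain
$$g_2\cc(\mc{K}) = \sum_{v}\bigl(h_2(\lk_{\mc{K}}(v)) - h_1(\lk_{\mc{K}}(v))\bigr) = \sum_{v} g_2(\lk_{\mc{K}}(v)).$$
So it suffices to show that each summand $g_2(\lk_{\mc{K}}(v))$ is nonnegative. For a vertex $v$ the link $\lk_{\mc{K}}(v)$ is a $3$-dimensional simplicial complex; since $\mc{K}$ is a $4$-sphere, $\lk_{\mc{K}}(v)$ is a simplicial $3$-sphere, in particular a $3$-dimensional homology manifold without boundary. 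Hence Kalai's theorem \cite{K87}, as recalled in Section \ref{intro} (a $(d-1)$-dimensional homology manifold without boundary with $d \geq 4$ satisfies $h_2 \geq h_1$), applies with $d = 4$ and gives $g_2(\lk_{\mc{K}}(v)) = h_2(\lk_{\mc{K}}(v)) - h_1(\lk_{\mc{K}}(v)) \geq 0$. Summing over all vertices yields $g_2\cc(\mc{K}) \geq 0$.

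The only step that is not pure bookkeeping is the identification of $\lk_{\mc{K}}(v)$ as a (homology) $3$-sphere, so that Kalai's inequality can be invoked termwise; this is precisely where the hypothesis that $\mc{K}$ is a cubical sphere, rather than merely a cubical pseudomanifold, enters the argument. Everything else only uses the two linear identities \eqref{dehn-sommerville} and \eqref{lem35} together with the relation between the long and short cubical $h$-vectors. I do not anticipate any serious obstacle beyond being careful that the indices in the Dehn--Sommerville and $h\cc$--$h\scc$ conversion formulas line up for $d = 4$.
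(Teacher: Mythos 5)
Your proof is correct and follows essentially the same route as the paper: combine the cubical Dehn--Sommerville relation $h_3\cc(\mc{K}) = h_2\cc(\mc{K})$ with Equation \eqref{lem35} to get $g_2\cc(\mc{K}) = h_2\scc(\mc{K}) - h_1\scc(\mc{K})$, then apply the simplicial lower bound theorem (Kalai's result) to the vertex links, which are simplicial $3$-spheres. No gaps.
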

\begin{proof}
By the cubical Dehn-Sommerville equations \eqref{dehn-sommerville}, $h_3\cc(\mc{K}) = h_2\cc(\mc{K})$; and by Equation \eqref{lem35}, $h_3\cc(\mc{K})-h_1\cc(\mc{K}) = h_2\scc(\mc{K})-h_1\scc(\mc{K})$.  Thus $h_2\cc(\mc{K})-h_1\cc(\mc{K}) = h_2\scc(\mc{K})-h_1\scc(\mc{K})$.  

Since the link of each vertex of $\mc{K}$ is a $3$-dimensional simplicial sphere, the simplicial LBT implies that $h_2\scc(\mc{K})-h_1\scc(\mc{K}) = \sum_vh_2(\lk_{\mc{K}}(v))-h_1(\lk_{\mc{K}}(v)) \geq 0$.  Thus $h_2\cc(\mc{K})-h_1\cc(\mc{K})~\geq~0$.
\end{proof}

The proof of Theorem \ref{4dim} can be modified to prove the following result for odd-dimensional cubical polytopes (whose boundary complexes are even-dimensional cubical spheres) by appealing to the $g$-theorem. 

\begin{proposition}
Let $\mc{K}$ be a cubical polytope of dimension $d=2k+1$.  Then $g_k\cc(\mc{K})~\geq~0$.  
\end{proposition}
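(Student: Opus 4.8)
The plan is to imitate the proof of Theorem~\ref{4dim}, with one substitution: its appeal to the simplicial lower bound theorem is replaced by an appeal to the full $g$-theorem. This substitution is legitimate precisely because the vertex links of a cubical polytope are boundary complexes of simplicial polytopes, whereas the inequality $g_k\geq 0$ is \emph{not} known for arbitrary simplicial $(2k-1)$-spheres. Throughout, identify $\mc{K}$ with its boundary complex, a $2k$-dimensional cubical sphere, hence an Eulerian cubical complex; we may assume $k\geq 1$, the case $k=0$ being trivial. First I would apply the cubical Dehn--Sommerville equations \eqref{dehn-sommerville} to this $2k$-dimensional Eulerian complex at index $i=k$: since $\widetilde{\chi}(\mc{K})=\widetilde{\chi}(\mathbb{S}^{2k})$, the right-hand side vanishes and $h_{k+1}\cc(\mc{K})=h_k\cc(\mc{K})$. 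Next I would apply Equation~\eqref{lem35} of Lemma~\ref{cubical_shortcubical_g-s} with $i=k$ (valid since $1\leq k\leq 2k$), giving $h_{k+1}\cc(\mc{K})-h_{k-1}\cc(\mc{K})=h_k\scc(\mc{K})-h_{k-1}\scc(\mc{K})$. Combining the two identities yields $g_k\cc(\mc{K})=h_k\cc(\mc{K})-h_{k-1}\cc(\mc{K})=h_k\scc(\mc{K})-h_{k-1}\scc(\mc{K})$.

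Now I would invoke Hetyei's identity $h_j\scc(\mc{K})=\sum_{v}h_j(\lk_{\mc{K}}(v))$ to rewrite the right-hand side as $\sum_{v}\bigl(h_k(\lk_{\mc{K}}(v))-h_{k-1}(\lk_{\mc{K}}(v))\bigr)=\sum_{v}g_k(\lk_{\mc{K}}(v))$, the sum being over all vertices $v$ of $\mc{K}$. For each such $v$, the simplicial complex $\lk_{\mc{K}}(v)$ is combinatorially the boundary complex of the vertex figure of $\mc{K}$ at $v$; since the vertex figure of a cube is a simplex, the vertex figure of the cubical $(2k+1)$-polytope $\mc{K}$ is a simplicial $2k$-polytope. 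By the $g$-theorem (Theorem~\ref{g-theorem}), the vector $\bigl(1,g_1(\lk_{\mc{K}}(v)),\ldots,g_k(\lk_{\mc{K}}(v))\bigr)$ is an M-vector, so in particular $g_k(\lk_{\mc{K}}(v))\geq 0$. Summing over all $v$ gives $g_k\cc(\mc{K})\geq 0$, as desired.

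The only genuinely nontrivial ingredient is this last step: the recognition that the vertex links are polytopal, so that one may apply the $g$-theorem (rather than the open GLBC for simplicial spheres) to conclude $g_k\geq 0$ at the middle index. Everything preceding it is formal manipulation of the cubical Dehn--Sommerville relations together with Lemma~\ref{cubical_shortcubical_g-s}, exactly paralleling the $k=2$ instance underlying Theorem~\ref{4dim}. One should also double-check the elementary index bookkeeping ($d=2k$ for the boundary complex, so that both \eqref{dehn-sommerville} and \eqref{lem35} may be used at $i=k$, and $\lfloor 2k/2\rfloor=k$ so that $g_k$ is a legitimate $g$-number of the $2k$-polytope vertex figure), but no difficulty is expected there.
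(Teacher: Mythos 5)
Your proof is correct and follows essentially the same route as the paper's: combine the cubical Dehn--Sommerville relation $h_{k+1}\cc=h_k\cc$ with Equation~\eqref{lem35} at $i=k$ to get $g_k\cc(\mc{K})=h_k\scc(\mc{K})-h_{k-1}\scc(\mc{K})$, then use polytopality of the vertex links and the $g$-theorem to conclude nonnegativity. The only difference is that you spell out the Hetyei decomposition and the index bookkeeping that the paper leaves implicit.
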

\begin{proof}
By the cubical Dehn-Sommerville equations \eqref{dehn-sommerville}, $h_{k+1}\cc(\mc{K}) = h_k\cc(\mc{K})$.  By Equation \eqref{lem35}, $h_{k+1}\cc(\mc{K})-h_{k-1}\cc(\mc{K}) = h_k\scc(\mc{K})-h_{k-1}\scc(\mc{K})$.  The link of each vertex in $\mc{K}$ is polytopal, and hence satisfies the $g$-theorem.  Thus $h_k\scc(\mc{K}) \geq h_{k-1}\scc(\mc{K})$, and the result follows.
\end{proof}

Now we will turn our attention to some partial results on even-dimensional cubical spheres.

\begin{lemma}\label{cubical_g_even_dimension}
Let $\mc{K}$ be a $d=2k$-dimensional cubical sphere.  Then
\begin{displaymath}
h_i\cc(\mc{K})-h_{i-1}\cc(\mc{K}) = \sum_{j=i}^k(-1)^{j-i}g_j\scc(\mc{K}), \qquad \text{ for all } 1 \leq i \leq k.
\end{displaymath}
\end{lemma}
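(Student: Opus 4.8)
The plan is to solve the recurrence provided by Lemma~\ref{cubical_shortcubical_g-s} by a downward induction on $i$, running from $i=k$ down to $i=1$, with the cubical Dehn--Sommerville equations supplying the base case. Write $g_i\cc(\mc{K}) := h_i\cc(\mc{K})-h_{i-1}\cc(\mc{K})$ and $g_j\scc(\mc{K}) := h_j\scc(\mc{K})-h_{j-1}\scc(\mc{K})$, so the claim becomes $g_i\cc(\mc{K}) = \sum_{j=i}^k(-1)^{j-i}g_j\scc(\mc{K})$ for $1\le i\le k$. First I would rewrite Equation~\eqref{lem35}: since $h_{i+1}\cc(\mc{K})-h_{i-1}\cc(\mc{K}) = g_{i+1}\cc(\mc{K})+g_i\cc(\mc{K})$, Lemma~\ref{cubical_shortcubical_g-s} is precisely the recurrence
$$g_{i+1}\cc(\mc{K})+g_i\cc(\mc{K}) = g_i\scc(\mc{K}), \qquad 1\le i\le d.$$

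For the base case $i=k$, note that $\mc{K}$ is Eulerian with $\widetilde{\chi}(\mc{K})=\widetilde{\chi}(\mathbb{S}^d)$, so the cubical Dehn--Sommerville equations~\eqref{dehn-sommerville} give $h_{d+1-i}\cc(\mc{K})=h_i\cc(\mc{K})$ for all $i$; taking $i=k$ and using $d=2k$ (so that $d+1-k=k+1$) yields $h_{k+1}\cc(\mc{K})=h_k\cc(\mc{K})$, i.e.\ $g_{k+1}\cc(\mc{K})=0$. Plugging this into the recurrence at index $k$ gives $g_k\cc(\mc{K})=g_k\scc(\mc{K})$, which is the asserted formula for $i=k$.

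For the inductive step, assume the formula holds at $i+1$ for some $1\le i\le k-1$. Applying the recurrence at index $i$ (legitimate since $i\le d$) and the inductive hypothesis,
$$g_i\cc(\mc{K}) = g_i\scc(\mc{K})-g_{i+1}\cc(\mc{K}) = g_i\scc(\mc{K})-\sum_{j=i+1}^k(-1)^{j-i-1}g_j\scc(\mc{K}) = \sum_{j=i}^k(-1)^{j-i}g_j\scc(\mc{K}),$$
which closes the induction. There is no genuine obstacle in this argument; the only things demanding care are getting the index right in the Dehn--Sommerville symmetry (using $d=2k$ to see that the fixed point $h_{k+1}\cc=h_k\cc$ is exactly where the telescoping must terminate) and the sign bookkeeping in the last display, both of which are routine.
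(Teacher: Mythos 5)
Your proof is correct and uses exactly the same two ingredients as the paper: Equation~\eqref{lem35} (rewritten as the recurrence $g_{i+1}\cc+g_i\cc=g_i\scc$) and the Dehn--Sommerville identity $h_{k+1}\cc(\mc{K})=h_k\cc(\mc{K})$. The paper telescopes the alternating sum directly rather than running a downward induction, but that is only a presentational difference.
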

\begin{proof}
We use Equation \eqref{lem35} and the cubical Dehn-Sommerville equations \eqref{dehn-sommerville} to obtain the following.
\begin{eqnarray*}
\sum_{j=i}^k(-1)^{j-i}g_j\scc(\mc{K}) &=& \sum_{j=i}^k(-1)^{j-i}(h_{j+1}\cc(\mc{K})-h_{j-1}\cc(\mc{K})) \\
&=& (-1)^{k-i}(h_{k+1}\cc(\mc{K})-h_k\cc(\mc{K})) + h_i\cc(\mc{K})-h_{i-1}\cc(\mc{K})\\
&=&h_i\cc(\mc{K})-h_{i-1}\cc(\mc{K}).
\end{eqnarray*}
\end{proof}

\begin{theorem}\label{g-thm-small-links}
Let $\mc{K}$ be a $(2k+1)$-dimensional polytope, and suppose that $g_2(\lk_{\mc{K}}(v)) \leq 2$ for all $v \in \mc{K}$.  Then $\mc{K}$ satisfies the cubical GLBC: $h_0\cc(\mc{K}) \leq h_1\cc(\mc{K}) \leq h_2\cc(\mc{K}) \leq \ldots \leq h_k\cc(\mc{K})$.
\end{theorem}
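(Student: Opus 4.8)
The plan is to pass to the short cubical $g$-numbers, bound them vertex by vertex using the $g$-theorem, and then recover the cubical $g$-numbers from Lemma~\ref{cubical_g_even_dimension}. Write $d = 2k$ for the dimension of the cubical sphere $\partial\mc{K}$. The first step is the observation that Hetyei's formula $h_j\scc(\mc{K}) = \sum_v h_j(\lk_{\mc{K}}(v))$ immediately gives
$$ g_j\scc(\mc{K}) := h_j\scc(\mc{K}) - h_{j-1}\scc(\mc{K}) = \sum_v g_j(\lk_{\mc{K}}(v)) \qquad \text{for all } j. $$
Since $\mc{K}$ is a polytope, each vertex link $\lk_{\mc{K}}(v)$ is a polytopal simplicial $(2k-1)$-sphere, so $(1, g_1(\lk_{\mc{K}}(v)), \dots, g_k(\lk_{\mc{K}}(v)))$ is an $M$-vector by Theorem~\ref{g-theorem}; in particular the Macaulay inequalities $0 \leq g_{i+1}(\lk_{\mc{K}}(v)) \leq g_i(\lk_{\mc{K}}(v))^{\langle i \rangle}$ hold for $1 \leq i \leq k-1$.

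The heart of the argument is to show that, under the hypothesis $g_2(\lk_{\mc{K}}(v)) \leq 2$, the sequence $g_1(\lk_{\mc{K}}(v)) \geq g_2(\lk_{\mc{K}}(v)) \geq \cdots \geq g_k(\lk_{\mc{K}}(v)) \geq 0$ is non-increasing for every vertex $v$. For this I would examine the compression operator $\ell \mapsto \ell^{\langle i \rangle}$ on the values $\ell \in \{0,1,2\}$: for $i \geq 2$ one checks $\ell^{\langle i \rangle} = \ell$ (using $2 = {i \choose i} + {i-1 \choose i-1}$, which is where $i \geq 2$ is needed), so the bound $g_2(\lk_{\mc{K}}(v)) \leq 2$ together with the Macaulay inequalities propagates by induction to $g_{j+1}(\lk_{\mc{K}}(v)) \leq g_j(\lk_{\mc{K}}(v)) \leq 2$ for all $j \geq 2$. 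The one remaining inequality, $g_1(\lk_{\mc{K}}(v)) \geq g_2(\lk_{\mc{K}}(v))$, follows from $g_2(\lk_{\mc{K}}(v)) \leq g_1(\lk_{\mc{K}}(v))^{\langle 1 \rangle}$ together with the fact that $\ell^{\langle 1 \rangle} > \ell$ only when $\ell \geq 2$, in which case $g_1(\lk_{\mc{K}}(v)) \geq 2 \geq g_2(\lk_{\mc{K}}(v))$ by hypothesis. Summing over all vertices then yields $g_1\scc(\mc{K}) \geq g_2\scc(\mc{K}) \geq \cdots \geq g_k\scc(\mc{K}) \geq 0$.

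To finish, for each $1 \leq i \leq k$, Lemma~\ref{cubical_g_even_dimension} gives
$$ h_i\cc(\mc{K}) - h_{i-1}\cc(\mc{K}) = \sum_{j=i}^{k} (-1)^{j-i} g_j\scc(\mc{K}) = g_i\scc(\mc{K}) - g_{i+1}\scc(\mc{K}) + g_{i+2}\scc(\mc{K}) - \cdots , $$
and since $g_i\scc(\mc{K}) \geq g_{i+1}\scc(\mc{K}) \geq \cdots \geq g_k\scc(\mc{K}) \geq 0$, pairing the summands consecutively (with a harmless non-negative leftover $g_k\scc(\mc{K})$ when $k-i$ is even) shows the alternating sum is non-negative. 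This gives $h_0\cc(\mc{K}) \leq h_1\cc(\mc{K}) \leq \cdots \leq h_k\cc(\mc{K})$; note the case $i = 1$ in particular re-derives $h_1\cc(\mc{K}) \geq h_0\cc(\mc{K})$, consistent with Theorem~\ref{g1bound}.

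I expect the second paragraph to be the only place requiring care: one must verify that the Macaulay compression operator collapses to the identity on $\{0,1,2\}$ in the relevant index range, which is exactly where the hypothesis $g_2 \leq 2$ and the applicability of Lemma~\ref{cubical_g_even_dimension} (via the even-dimensionality of $\partial\mc{K}$) are both used. The remaining steps — summing over vertices and pairing the alternating sum — are routine, and the degenerate case $k = 1$, where the hypothesis on $g_2$ is vacuous, is absorbed by the same argument since $g_1\scc(\mc{K}) = \sum_v g_1(\lk_{\mc{K}}(v)) \geq 0$.
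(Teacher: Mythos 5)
Your proof is correct and follows essentially the same route as the paper: apply the $g$-theorem to the (polytopal) vertex links, use the Macaulay operator to propagate $g_2 \leq 2$ into a non-increasing $g$-sequence, and convert via Lemma~\ref{cubical_g_even_dimension} by pairing the alternating sum. Your explicit verification that $g_1(\lk_{\mc{K}}(v)) \geq g_2(\lk_{\mc{K}}(v))$ (needed for the $i=1$ case) is a welcome bit of extra care that the paper's proof leaves implicit.
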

\begin{proof}
For all vertices $v \in \mc{K}$, the link of $v$ in $\mc{K}$ is a simplicial polytope, and hence the $g$-theorem holds for $\lk_{\mc{K}}(v)$.  In particular the vector $(1,g_1(\lk_{\mc{K}}(v)),\ldots,g_k(\lk_{\mc{K}}(v)))$ is an $M$-vector.  As such, expressing each $g_i(\lk_{\mc{K}}(v))$ in the form $$g_i(\lk_{\mc{K}}(v)) = {n_i \choose i} + {n_{i-1} \choose i-1} + \ldots +{n_s \choose s}$$ with $n_i>n_{i-1}>\ldots>n_s\geq s \geq1$, we obtain $$0 \leq g_{i+1}(\lk_{\mc{K}}(v)) \leq \left(g_i(\lk_{\mc{K}}(v))\right)^{\langle i\rangle} = {n_i+1 \choose i+1} + {n_{i-1}+1 \choose i} + \ldots + {n_s+1 \choose s+1}.$$  Since $g_2(\lk_{\mc{K}}(v)) \leq 2 = {2 \choose 2}+{1 \choose 1}$, it follows that $g_3(\lk_{\mc{K}}(v)) \leq {3 \choose 3} + {2 \choose 2}=2$.  In fact, by repeating this argument, we see that $g_i(\lk_{\mc{K}}(v)) \leq 2$ for all $i$.  Moreover, if $g_i(\lk_{\mc{K}}(v)) = 1 = {i \choose i}$ for some $i$, then $g_{i+1}(\lk_{\mc{K}}(v)) \leq {i+1 \choose i+1} = 1$ as well; and if $g_i(\lk_{\mc{K}}(v))=0$, then $g_{i+1}(\lk_{\mc{K}}(v)) = 0$.  Therefore, $$0 \leq g_{k}(\lk_{\mc{K}}(v)) \leq \cdots \leq g_3(\lk_{\mc{K}}(v)) \leq g_2(\lk_{\mc{K}}(v)) \leq 2,$$ for all $v \in \mc{K}$.  Thus, by Lemma \ref{cubical_g_even_dimension},
\begin{eqnarray*}
h_i\cc(\mc{K})-h_{i-1}\cc(\mc{K}) &=& \sum_{j=i}^k(-1)^{j-i}g_j\scc(\mc{K}) = \sum_v\sum_{j=i}^k(-1)^{j-i}g_j(\lk_{\mc{K}}(v)) \geq 0.
\end{eqnarray*}
\end{proof}

\begin{proposition}
Let $\mc{K}$ be a $2k$-dimensional cubical sphere, and suppose that $f_0(\lk_{\mc{K}}(v)) \in \{2k+1,2k+2\}$ for all $v \in \mc{K}$.  Then $\mc{K}$ satisfies the cubical GLBC.
\end{proposition}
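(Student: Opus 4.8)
The plan is to reduce everything to an analysis of the vertex links, which under the hypothesis are simplicial spheres with the smallest or second-smallest possible number of vertices, and then feed the resulting $g$-number information into Lemma~\ref{cubical_g_even_dimension}, exactly along the lines of the proof of Theorem~\ref{g-thm-small-links}.

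First I would record what the vertex links look like. Since $\mc{K}$ is a $2k$-dimensional cubical sphere, each $\lk_{\mc{K}}(v)$ is a $(2k-1)$-dimensional simplicial sphere, and every such sphere has at least $2k+1$ vertices, with equality exactly when $\lk_{\mc{K}}(v)$ is the boundary complex $\partial\Delta^{2k}$ of the $2k$-simplex. If instead $f_0(\lk_{\mc{K}}(v)) = 2k+2$, then by the classical classification of simplicial $(d-1)$-spheres with $d+2$ vertices (which are all polytopal), $\lk_{\mc{K}}(v)$ is combinatorially $\partial\Delta^a * \partial\Delta^b$ for some integers $a,b\ge 1$ with $a+b = 2k$. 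In either case the link is polytopal, so the $g$-theorem applies to it, and, what matters more here, its $h$-polynomial is completely explicit.

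Next I would compute the $g$-numbers. For $\partial\Delta^{2k}$ the $h$-vector is $(1,1,\ldots,1)$, so $g_j(\lk_{\mc{K}}(v)) = 0$ for every $j\ge 1$. For $\partial\Delta^a*\partial\Delta^b$ the $h$-polynomial is the product $(1+\lambda+\cdots+\lambda^a)(1+\lambda+\cdots+\lambda^b)$, and reading off coefficients gives $g_j(\lk_{\mc{K}}(v)) = 1$ for $1\le j\le \min(a,b)$ and $g_j(\lk_{\mc{K}}(v)) = 0$ for $\min(a,b) < j \le k$. In both cases one checks directly that, for every $v$ and every $1\le i\le k$,
\begin{displaymath}
\sum_{j=i}^k(-1)^{j-i}g_j(\lk_{\mc{K}}(v)) \in \{0,1\},
\end{displaymath}
so in particular it is nonnegative. (Alternatively, since $h_1(\lk_{\mc{K}}(v)) = f_0(\lk_{\mc{K}}(v)) - 2k \le 2$ forces $g_2(\lk_{\mc{K}}(v)) \le 1 \le 2$, the hypothesis of Theorem~\ref{g-thm-small-links} is satisfied by each link, and one may simply invoke the argument given there, which uses nothing about $\mc{K}$ beyond polytopality of the vertex links together with Lemma~\ref{cubical_g_even_dimension}.)

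Finally I would sum over vertices: using $h_j\scc(\mc{K}) = \sum_v h_j(\lk_{\mc{K}}(v))$, and hence $g_j\scc(\mc{K}) = \sum_v g_j(\lk_{\mc{K}}(v))$, the previous step yields $\sum_{j=i}^k(-1)^{j-i}g_j\scc(\mc{K}) \ge 0$ for all $1\le i\le k$. By Lemma~\ref{cubical_g_even_dimension} the left-hand side equals $h_i\cc(\mc{K}) - h_{i-1}\cc(\mc{K})$, so $h_0\cc(\mc{K}) \le h_1\cc(\mc{K}) \le \cdots \le h_k\cc(\mc{K})$, which is the cubical GLBC. The only nontrivial input is the classification of spheres with few vertices used in the second step (equivalently, that such links are polytopal, so the $g$-theorem is available to control all of $g_2,\ldots,g_k$ of the link); everything after that is bookkeeping with the identities of Section~\ref{lowerbounds}, and I expect that classification step to be the main obstacle, since without it the higher $g$-numbers $g_3(\lk_{\mc{K}}(v)),\ldots,g_k(\lk_{\mc{K}}(v))$ cannot be pinned down from $f_0(\lk_{\mc{K}}(v))$ alone.
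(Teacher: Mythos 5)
Your proposal is correct and follows essentially the same route as the paper's (sketched) proof: both rest on the well-known fact that a $(d-1)$-sphere on $d+1$ or $d+2$ vertices is polytopal, so $g_1(\lk_{\mc{K}}(v))\leq 1$ and the $g$-theorem/Macaulay argument from Theorem~\ref{g-thm-small-links} combined with Lemma~\ref{cubical_g_even_dimension} finishes the job. Your explicit identification of the links as $\partial\Delta^{2k}$ or $\partial\Delta^a*\partial\Delta^b$ and the resulting $g$-number computation is a correct but unnecessary refinement; the parenthetical alternative you give is exactly the paper's argument.
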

\begin{proof} (\textit{sketch})
It is well known that a $(d-1)$-dimensional simplicial sphere  on $d+1$ or $d+2$ vertices can be realized as the boundary of a $d$-polytope.  Since  $g_1(\lk_{\mc{K}}(v)) \leq 1$ for all vertices $v \in \mc{K}$, the argument used to prove Theorem \ref{g-thm-small-links} gives the desired result.
\end{proof}


\section{Dehn-Sommerville Equations for Manifolds with Boundary} \label{cubicalDSsection}
 Novik and Swartz \cite{NS08} prove the following generalization of the Dehn-Sommerville equations for simplicial manifolds with boundary.

\begin{theorem} \label{NS-DehnSommerville} {\rm (\cite[Theorem 3.1]{NS08})}
Let $\Delta$ be a $(d-1)$-dimensional simplicial homology manifold with boundary.  Then
$$h_{d-i}(\Delta) - h_{i}(\Delta) = {d \choose i} (-1)^{d-i-1} \widetilde{\chi}(\Delta) - g_i(\partial \Delta),$$
for $0 \leq i \leq d$ where $g_i(\partial \Delta) = h_i(\partial \Delta)-h_{i-1}(\partial \Delta)$.
\end{theorem}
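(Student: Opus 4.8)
My plan is to recast the equations as a single polynomial identity and prove that by Klee's summation trick. Put $H_\Delta(t) := \sum_j h_j(\Delta)t^j$ and $H_{\partial\Delta}(t) := \sum_j h_j(\partial\Delta)t^j$. Multiplying the asserted identity by $t^i$ and summing over $0 \leq i \leq d$, and using $\sum_i h_{d-i}(\Delta)t^i = t^d H_\Delta(1/t)$, $\sum_i \binom{d}{i}(-1)^{d-i-1}t^i = (-1)^{d-1}(1-t)^d$, and $\sum_i g_i(\partial\Delta)t^i = (1-t)H_{\partial\Delta}(t)$, the theorem becomes equivalent to
\[
t^d H_\Delta(1/t) - H_\Delta(t) = (-1)^{d-1}\widetilde\chi(\Delta)(1-t)^d - (1-t)H_{\partial\Delta}(t). \qquad (\dagger)
\]

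To reach $(\dagger)$ I would first record the identity, valid for an arbitrary $(d-1)$-dimensional simplicial complex and every scalar $q$,
\[
\sum_{F\in\Delta}\widetilde\chi(\lk_\Delta F)\,q^{|F|} = -\sum_{i=0}^d f_{i-1}(\Delta)(q-1)^i, \qquad (\star)
\]
which follows by substituting $\widetilde\chi(\lk_\Delta F) = -\sum_{G\supseteq F}(-1)^{|G|-|F|}$ and switching the order of the double sum $\sum_F\sum_{G\supseteq F}$ to $\sum_G\sum_{F\subseteq G}$, the inner sum collapsing to $(1-q)^{|G|}$. Next I evaluate the left side of $(\star)$ using that $\Delta$ is a homology manifold with boundary: the term $F=\emptyset$ contributes $\widetilde\chi(\Delta)$; a nonempty interior face $F$ has $\lk_\Delta F$ a homology $(d-1-|F|)$-sphere and contributes $(-1)^{d-1-|F|}q^{|F|}$; and a nonempty boundary face has $\lk_\Delta F$ a homology ball and contributes $0$. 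Writing $f_\Gamma(x):=\sum_i f_{i-1}(\Gamma)x^i$, this shows the left side of $(\star)$ equals $\widetilde\chi(\Delta)+(-1)^{d-1}\bigl(f_\Delta(-q)-f_{\partial\Delta}(-q)\bigr)$ — the boundary subcomplex $\partial\Delta$ enters here because the nonempty interior faces do not form a complex on their own. Equating this with the right side of $(\star)$, substituting $q = 1/(1-t)$, and rewriting each $f_\Gamma$ through the standard change of variables $f_\Gamma(x) = (1+x)^{\dim\Gamma+1}H_\Gamma\bigl(x/(1+x)\bigr)$ (under which $q-1\mapsto t$ and $-q\mapsto 1/t$), the identity simplifies, after clearing denominators, to
\[
t^d H_\Delta(1/t) - H_\Delta(t) = \widetilde\chi(\Delta)(1-t)^d - (1-t)\,t^{d-1}H_{\partial\Delta}(1/t). \qquad (\spadesuit)
\]

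Finally I bridge $(\spadesuit)$ to $(\dagger)$ using two facts about the closed homology manifold $\partial\Delta$, which is $(d-2)$-dimensional and hence semi-Eulerian. Klee's Dehn--Sommerville relations (quoted above) give, in generating-function form, $t^{d-1}H_{\partial\Delta}(1/t) = H_{\partial\Delta}(t) + \bigl(\widetilde\chi(\partial\Delta)-\widetilde\chi(\mathbb{S}^{d-2})\bigr)(1-t)^{d-1}$; and the long exact homology sequence of the pair $(\Delta,\partial\Delta)$ together with Lefschetz duality gives the Euler-characteristic relation $\widetilde\chi(\partial\Delta)-\widetilde\chi(\mathbb{S}^{d-2}) = \bigl(1-(-1)^{d-1}\bigr)\widetilde\chi(\Delta)$. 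Substituting both into $(\spadesuit)$, the terms proportional to $(1-t)^d$ collapse to $(-1)^{d-1}\widetilde\chi(\Delta)(1-t)^d$ and we obtain exactly $(\dagger)$; reading off the coefficient of $t^i$ recovers the theorem. The main obstacle is not an isolated deep step but the bookkeeping of signs and indices through the $f\!\to\!H$ substitution — especially remembering that $\partial\Delta$ carries its own parameter $d-1$ in place of $d$ — together with isolating the one genuinely topological ingredient, the relation between $\widetilde\chi(\partial\Delta)$ and $\widetilde\chi(\Delta)$, which is precisely what produces the factor $(-1)^{d-1}$ in front of $\widetilde\chi(\Delta)$.
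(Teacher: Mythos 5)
Your proof is correct. Note first that the paper does not actually prove this statement --- it is quoted from Novik--Swartz \cite{NS08} --- but it does prove the cubical analogue (Theorem \ref{cubicalDS}) by the same core device you use: sum the local identity $\widetilde{\chi}(\lk_\Delta F)=-\sum_{G\supseteq F}(-1)^{|G|-|F|}$ over all faces (your $(\star)$ is exactly the specialization of Stanley's Theorem II.7.1 that underlies Lemma \ref{newII71}), then evaluate each link as a homology sphere (interior face) or acyclic complex (boundary face). Where you diverge is in the bookkeeping after that point. Your substitution $q=1/(1-t)$ sends the interior-face sum to the \emph{reversed} polynomials $t^dH_\Delta(1/t)$ and $t^{d-1}H_{\partial\Delta}(1/t)$, so you must then un-reverse the boundary term; this costs you two extra nontrivial inputs, namely Klee's Dehn--Sommerville relations for the closed $(d-2)$-manifold $\partial\Delta$ and the relation $\widetilde{\chi}(\partial\Delta)-\widetilde{\chi}(\mathbb{S}^{d-2})=(1-(-1)^{d-1})\widetilde{\chi}(\Delta)$ from Lefschetz duality. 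Both are standard and correctly applied (and you are right to flag the latter as the one genuinely topological step), but the paper's route for the cubical case avoids them entirely: there the nonempty interior faces are counted as $f_i(\mc{K})-f_i(\partial\mc{K})$ and kept on the \emph{unreversed} side, so the boundary contribution appears directly as $(1-\lambda)H_{\partial\Delta}(\lambda)$, i.e.\ as $g_j(\partial\Delta)$, with no appeal to duality for $\partial\Delta$; the factor $(-1)^{d-1}\widetilde{\chi}(\Delta)$ then comes solely from the $F=\emptyset$ term. So your proof is a valid but slightly less economical variant, trading one algebraic rearrangement for two topological facts. One typographical slip: in deriving $(\star)$ the inner sum $\sum_{F\subseteq G}(-1)^{|G|-|F|}q^{|F|}$ collapses to $(q-1)^{|G|}$, not $(1-q)^{|G|}$; the right-hand side of $(\star)$ as you state it is the correct one, so nothing downstream is affected.
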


\noindent Babson, Billera, and Chan \cite[Proposition 4.1.2]{BBC} state that if $\mc{K}$ is a cubical $d$-ball, then $h_{d+1-i}\cc(\mc{K}) - h_i\cc(\mc{K}) = -g_i\cc(\partial \mc{K})$ for all $1 \leq i \leq d$.  Our goal for this section is to generalize this result and prove the following cubical analogue of Theorem \ref{NS-DehnSommerville}.

\begin{theorem}\label{cubicalDS}
Let $\mc{K}$ be a $d$-dimensional cubical manifold with boundary.  For all $1 \leq j \leq d$,
\begin{displaymath}
h_{d+1-j}\cc(\mc{K}) - h_{j}\cc(\mc{K}) = (-1)^j(-2)^{d}\widetilde{\chi}(\mc{K}) - g_j\cc(\partial \mc{K}).
\end{displaymath}
\end{theorem}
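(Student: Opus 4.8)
The plan is to reduce the cubical manifold-with-boundary case to something controllable by relating the short cubical $h$-vector of $\mc{K}$ to the simplicial $h$-vectors of its vertex links, and then exploiting the fact that the link of a vertex in a cubical manifold with boundary is a simplicial homology manifold (possibly with boundary). Concretely, recall from Hetyei's formula that $h_j\scc(\mc{K}) = \sum_{v \in \mc{K}} h_j(\lk_{\mc{K}}(v))$. For a vertex $v$ in the interior of $\mc{K}$, the link $\lk_{\mc{K}}(v)$ is a $(d-1)$-dimensional simplicial homology sphere, so Klee's classical Dehn-Sommerville equations give $h_{d-j}(\lk_{\mc{K}}(v)) = h_j(\lk_{\mc{K}}(v))$. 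For a vertex $v$ on the boundary $\partial \mc{K}$, the link $\lk_{\mc{K}}(v)$ is a $(d-1)$-dimensional simplicial homology manifold with boundary, and its boundary is precisely $\lk_{\partial\mc{K}}(v)$; here we invoke Theorem~\ref{NS-DehnSommerville} to get $h_{d-j}(\lk_{\mc{K}}(v)) - h_j(\lk_{\mc{K}}(v)) = \binom{d}{j}(-1)^{d-j-1}\widetilde{\chi}(\lk_{\mc{K}}(v)) - g_j(\lk_{\partial\mc{K}}(v))$.

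The next step is to sum these per-vertex identities over all $v \in \mc{K}$. The left sides sum to $h_{d-j}\scc(\mc{K}) - h_j\scc(\mc{K})$. The $g_j(\lk_{\partial\mc{K}}(v))$ terms, summed over $v \in \partial\mc{K}$ (they vanish for interior $v$), assemble via Hetyei's formula applied to the $(d-1)$-dimensional cubical complex $\partial\mc{K}$ into $g_j\scc(\partial\mc{K}) := h_j\scc(\partial\mc{K}) - h_{j-1}\scc(\partial\mc{K})$. The Euler characteristic terms require the most care: one needs $\sum_{v \in \mc{K}} \widetilde{\chi}(\lk_{\mc{K}}(v))$ — but only boundary vertices contribute, since an interior vertex link is a homology sphere of even-or-odd dimension whose reduced Euler characteristic is $(-1)^{d-1}$... so in fact interior vertices contribute a uniform constant and one must bookkeep $f_0(\mc{K}) - f_0(\partial\mc{K})$ copies of it, while boundary vertices contribute $\widetilde{\chi}(\lk_{\mc{K}}(v)) = \widetilde{\chi}(\mathbb{D}^{d-1}) = 0$-type terms; a cleaner route is to recognize $\sum_v \widetilde{\chi}(\lk_{\mc{K}}(v))$ as essentially an Euler-characteristic count that collapses to a multiple of $\widetilde{\chi}(\mc{K})$ using $\widetilde{\chi}(\mc{K}) = \widetilde{\chi}(\mathbb{D}^d)$-normalization and the combinatorial identity $\sum_{i}(-1)^i (i+1) f_i = $ (something Euler-characteristic-like). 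After this aggregation one obtains an identity purely in terms of the short cubical $h$-vectors of $\mc{K}$ and $\partial\mc{K}$ and $\widetilde{\chi}(\mc{K})$.

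The final step is to convert from the short cubical $h$-vector to the long one. Using the defining recursion $h_{i+1}\cc + h_i\cc = h_i\scc$ together with the closed form from Lemma~\ref{cubical_shortcubical_g-s}, the alternating partial sums of $h_j\scc(\mc{K})$ telescope into $h_{j+1}\cc(\mc{K})$; doing the analogous manipulation on the $\partial\mc{K}$ side turns $g_j\scc(\partial\mc{K})$ into $g_j\cc(\partial\mc{K})$ (note $\partial\mc{K}$ has dimension $d-1$, so its long cubical $h$-vector has a shifted index range, and one must check the boundary terms of the telescoping match up — this is where the $h_0\cc$ normalizations $2^d$ versus $2^{d-1}$ enter). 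Tracking the signs and the $(-2)^d$ factor through the short-to-long conversion then yields exactly $h_{d+1-j}\cc(\mc{K}) - h_j\cc(\mc{K}) = (-1)^j(-2)^d \widetilde{\chi}(\mc{K}) - g_j\cc(\partial\mc{K})$. I expect the main obstacle to be the Euler-characteristic bookkeeping in the middle step: correctly accounting for which vertex links are spheres versus manifolds-with-boundary, pinning down $\sum_v \widetilde{\chi}(\lk_{\mc{K}}(v))$ in terms of $\widetilde{\chi}(\mc{K})$ (rather than in terms of $f$-numbers of $\mc{K}$), and making sure the constant $(-2)^d$ emerges with the right sign after the short-to-long telescoping; the simplicial input (Theorems~\ref{NS-DehnSommerville} and Klee's equations) and the algebraic conversion are routine by comparison.
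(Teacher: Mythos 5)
Your overall architecture is sound and in fact runs parallel to the paper's: both arguments reduce the theorem to the single short-$h$ identity $h_{d-j}\scc(\mc{K}) - h_j\scc(\mc{K}) = -g_j\scc(\partial\mc{K})$ and then pass from short to long cubical $h$-numbers by the telescoping recursion $h_{i+1}\cc + h_i\cc = h_i\scc$ and induction on $j$. Your route to that identity (sum Klee's relation over interior vertex links and Theorem \ref{NS-DehnSommerville} over boundary vertex links, then assemble via Hetyei's formula, using $\partial(\lk_{\mc{K}}(v)) = \lk_{\partial\mc{K}}(v)$ and $\sum_v g_j(\lk_{\partial\mc{K}}(v)) = g_j\scc(\partial\mc{K})$) is a legitimate alternative to the paper's, which instead derives a generating-function identity (Lemma \ref{newII71}) expressing $\sum_j h_j\scc(\mc{K})\lambda^{d-j}$ as a sum over interior faces and compares it with the polynomial built from $f_i(\mc{K}) - f_i(\partial\mc{K})$.

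The genuine problem is your middle paragraph, where you flag the ``Euler-characteristic bookkeeping'' as the main obstacle and propose to resolve it by showing that $\sum_v \widetilde{\chi}(\lk_{\mc{K}}(v))$ collapses to a multiple of $\widetilde{\chi}(\mc{K})$. That is the wrong place to look for the $(-2)^d\widetilde{\chi}(\mc{K})$ term, and pursuing it would lead you astray. In your own setup there is nothing to bookkeep at this stage: for an interior vertex you already used the exact symmetry $h_{d-j}(\lk_{\mc{K}}(v)) = h_j(\lk_{\mc{K}}(v))$, so no Euler-characteristic term appears, and for a boundary vertex $\lk_{\mc{K}}(v)$ is a ball, so $\widetilde{\chi}(\lk_{\mc{K}}(v)) = 0$ and the Novik--Swartz correction term vanishes. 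Hence the summed identity is simply $h_{d-j}\scc(\mc{K}) - h_j\scc(\mc{K}) = -g_j\scc(\partial\mc{K})$, with no Euler characteristic in sight. The global term $(-1)^j(-2)^d\widetilde{\chi}(\mc{K})$ enters only in the short-to-long conversion, through the base case of the induction: one needs $h_{d+1}\cc(\mc{K}) - h_0\cc(\mc{K}) = (-2)^d\widetilde{\chi}(\mc{K}) - 2^d$, which follows from Lemma \ref{cubical_shortcubical_g-s} with $i=d$ together with the evaluation $\sum_{j=0}^d(-1)^{d-j}h_j\scc(\mc{K}) = (-2)^d(\widetilde{\chi}(\mc{K})+1)$ obtained by setting $\lambda = -1$ in the defining relation for $h\scc$. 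Your sketch of the conversion step never identifies this, and without it the induction has no base case; once you supply it, the inductive step is exactly the recursion $h_{d+1-j}\cc - h_j\cc = -\bigl(h_{d+2-j}\cc - h_{j-1}\cc\bigr) + h_{d-j+1}\scc - h_{j-1}\scc$ combined with $g_{j-1}\scc(\partial\mc{K}) = g_j\cc(\partial\mc{K}) + g_{j-1}\cc(\partial\mc{K})$, and the proof closes.
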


\begin{lemma}\label{newII71}
If $\mc{K}$ is a $d$-dimensional cubical complex, then
\begin{displaymath}
\sum_{j=0}^dh_j\scc(\mc{K})\lambda^{d-j} = \sum_{\stackrel{F \in \mc{K}}{F \neq \emptyset}}(-1)^{d-\dim F-1}\widetilde{\chi}(\lk_{\mc{K}}(F))(2\lambda)^{\dim F}(1-\lambda)^{d-\dim F}.
\end{displaymath}
\end{lemma}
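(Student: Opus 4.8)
The plan is to prove the identity by expanding both sides as polynomials in $\lambda$ whose coefficients are expressed through the face numbers $f_i(\mc{K})$, and then observing that the two expansions coincide.

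First I would rewrite the left-hand side. Starting from the defining relation $\sum_{j=0}^d h_j\scc(\mc{K})\lambda^j = \sum_{i=0}^d f_i(\mc{K})(2\lambda)^i(1-\lambda)^{d-i}$, substitute $\lambda \mapsto 1/\lambda$ and multiply through by $\lambda^d$. A one-line computation gives
$$\sum_{j=0}^d h_j\scc(\mc{K})\,\lambda^{d-j} = \sum_{i=0}^d f_i(\mc{K})\,2^i(\lambda-1)^{d-i},$$
so it suffices to show that the right-hand side of the lemma also equals $\sum_{i=0}^d f_i(\mc{K})\,2^i(\lambda-1)^{d-i}$.

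Next I would expand the right-hand side of the lemma. For a nonempty face $F \in \mc{K}$, the complex $\lk_{\mc{K}}(F)$ is (the face poset of) a simplicial complex of dimension $d-\dim F-1$ whose $s$-dimensional faces are exactly the faces $G \in \mc{K}$ with $G \supseteq F$ and $\dim G = \dim F + s + 1$ (the empty face corresponding to $G=F$). Hence $\widetilde{\chi}(\lk_{\mc{K}}(F)) = \sum_{G \supseteq F}(-1)^{\dim G - \dim F - 1}$. Substituting this and using $(-1)^{d-\dim F-1}(-1)^{\dim G - \dim F - 1} = (-1)^{d-\dim G}$, the right-hand side becomes
$$\sum_{\emptyset \neq F \in \mc{K}}\ \sum_{G \supseteq F}(-1)^{d-\dim G}(2\lambda)^{\dim F}(1-\lambda)^{d-\dim F}.$$
Now interchange the order of summation, summing first over $G \neq \emptyset$ and then over the nonempty subfaces $F \subseteq G$. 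Since a $\dim G$-cube has exactly $\binom{\dim G}{k}2^{\dim G - k}$ subfaces of dimension $k$, the inner sum equals
$$(1-\lambda)^{d-\dim G}\sum_{k=0}^{\dim G}\binom{\dim G}{k}(2\lambda)^k\bigl(2(1-\lambda)\bigr)^{\dim G - k} = 2^{\dim G}(1-\lambda)^{d-\dim G}$$
by the binomial theorem. Grouping these terms by $\dim G$ yields $\sum_{m=0}^d(-1)^{d-m}2^m f_m(\mc{K})(1-\lambda)^{d-m}$, which is precisely $\sum_{i=0}^d f_i(\mc{K})\,2^i(\lambda-1)^{d-i}$. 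Comparing with the rewritten left-hand side completes the proof.

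The argument is essentially bookkeeping, and the one place that requires care is the double-counting step: correctly writing $\widetilde{\chi}(\lk_{\mc{K}}(F))$ as an alternating sum over the cofaces of $F$, tracking the sign so that it collapses to $(-1)^{d-\dim G}$ after the interchange, and using the count $\binom{m}{k}2^{m-k}$ for the number of $k$-faces of an $m$-cube. Everything else is routine manipulation of generating functions.
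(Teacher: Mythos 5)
Your proof is correct, and it takes a genuinely different route from the paper's. The paper derives the identity by localizing at vertices: it invokes Stanley's Theorem II.7.1 for simplicial complexes to get the analogous formula for each vertex link $\lk_{\mc{K}}(v)$, translates faces of $\lk_{\mc{K}}(v)$ into faces of $\mc{K}$ containing $v$, and then sums over all vertices using Hetyei's formula $h_j\scc(\mc{K}) = \sum_v h_j(\lk_{\mc{K}}(v))$; the factor $2^{\dim F}$ that turns $\lambda^{\dim F}$ into $(2\lambda)^{\dim F}$ appears because each face $F$ is counted once for each of its $2^{\dim F}$ vertices. You instead prove the identity from scratch by a direct generating-function computation: reversing the defining relation to get $\sum_j h_j\scc(\mc{K})\lambda^{d-j} = \sum_i f_i(\mc{K})2^i(\lambda-1)^{d-i}$, expanding $\widetilde{\chi}(\lk_{\mc{K}}(F))$ as an alternating sum over cofaces, interchanging the order of summation, and collapsing the inner sum with the binomial theorem via the count $\binom{m}{k}2^{m-k}$ of $k$-faces of an $m$-cube. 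All the individual steps check out (including the sign bookkeeping $(-1)^{d-\dim F-1}(-1)^{\dim G-\dim F-1}=(-1)^{d-\dim G}$ and the treatment of $G=F$ as the empty face of the link). Your argument is more self-contained, needing no external input beyond the definition of the short cubical $h$-vector, whereas the paper's is shorter on the page because it outsources the combinatorial identity to the known simplicial case and leverages the already-established link decomposition of $h\scc$.
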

\begin{proof}
Coarsening the Hilbert series in Theorem II.7.1 of \cite{S96} implies that if $\Delta$ is a $(d-1)$-dimensional simplicial complex, then
\begin{equation}\label{h-num-gen-function}
\sum_{j=0}^dh_j(\Delta)\lambda^{d-j} = \sum_{\sigma \in \Delta}(-1)^{d-(\dim \sigma+1)-1}\widetilde{\chi}(\lk_{\Delta}(\sigma))\lambda^{\dim \sigma+1}(1-\lambda)^{d-(\dim \sigma+1)} .
\end{equation}
The link of each vertex $v \in \mc{K}$ is a $(d-1)$-dimensional simplicial complex whose $(i-1)$-dimensional faces $\sigma \in \lk_{\mc{K}}(v)$ correspond to faces $F \in \mc{K}$ of dimension $i$ that contain $v$.  By Equation \eqref{h-num-gen-function},
\begin{equation}\label{h-num-local-gen-function}
\sum_{j=0}^dh_j(\lk_{\mc{K}}(v))\lambda^{d-j} = \sum_{\stackrel{F \in \mc{K}}{v \in F}}(-1)^{d-\dim F-1}\widetilde{\chi}(\lk_{\mc{K}}(F))\lambda^{\dim F}(1-\lambda)^{d-\dim F}.
\end{equation}
Hetyei's observation \cite[Theorem 9]{A95} that $h_j\scc(\mc{K}) = \sum_{v \in \mc{K}}h_j(\lk_{\mc{K}}(v))$, together with Equation \eqref{h-num-local-gen-function} yield the desired result.
\end{proof}

Suppose now that $\mc{K}$ is a cubical manifold with boundary.  If $F \in \partial \mc{K}$, then $\lk_{\mc{K}}(F)$ is a ball and $\widetilde{\chi}(\lk_{\mc{K}}(F)) = 0$; and if $F \in \mc{K} - \partial \mc{K}$, then $\lk_{\mc{K}}(F)$ is a simplicial sphere of dimension $d-\dim F-1$ and $\widetilde{\chi}(\lk_{\mc{K}}(F)) = (-1)^{d-\dim F-1}$.  Thus for manifolds, Lemma \ref{newII71} implies the following.
\begin{corollary} \label{manifoldII71}
Let $\mc{K}$ be a $d$-dimensional cubical manifold with boundary $\partial \mc{K}$.  Then
\begin{displaymath}
\sum_{j=0}^dh_j\scc(\mc{K})\lambda^{d-j} = \sum_{\substack{ F \in \mc{K}-\partial\mc{K} \\F \neq \emptyset}}(2\lambda)^{\dim F}(1-\lambda)^{d-\dim F}.
\end{displaymath}
\end{corollary}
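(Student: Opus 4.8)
The plan is to deduce this identity directly from Lemma \ref{newII71} by evaluating the reduced Euler characteristics $\widetilde{\chi}(\lk_{\mc{K}}(F))$ that appear on its right-hand side, using the local structure of a cubical manifold with boundary. Lemma \ref{newII71} already writes $\sum_{j=0}^d h_j\scc(\mc{K})\lambda^{d-j}$ as a sum over all nonempty faces $F \in \mc{K}$ of the terms $(-1)^{d-\dim F-1}\widetilde{\chi}(\lk_{\mc{K}}(F))(2\lambda)^{\dim F}(1-\lambda)^{d-\dim F}$, so the whole task reduces to determining the scalar $(-1)^{d-\dim F-1}\widetilde{\chi}(\lk_{\mc{K}}(F))$ for each nonempty $F$.

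First I would split the sum over nonempty faces according to whether $F \in \partial\mc{K}$ or $F \in \mc{K} - \partial\mc{K}$. For an interior face $F$, the link $\lk_{\mc{K}}(F)$ is a simplicial (homology) sphere of dimension $d-\dim F-1$, so $\widetilde{\chi}(\lk_{\mc{K}}(F)) = (-1)^{d-\dim F-1}$; multiplying this by the prefactor $(-1)^{d-\dim F-1}$ yields $(-1)^{2(d-\dim F-1)} = 1$. For a boundary face $F$, the link $\lk_{\mc{K}}(F)$ is a (homology) ball, hence contractible, so $\widetilde{\chi}(\lk_{\mc{K}}(F)) = 0$ and its term disappears. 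Substituting these values into Lemma \ref{newII71} collapses the right-hand side exactly to $\sum_{F \in \mc{K}-\partial\mc{K},\ F \neq \emptyset}(2\lambda)^{\dim F}(1-\lambda)^{d-\dim F}$, which is the asserted identity. (Note in particular that facets, which have $\dim F = d$ and link of dimension $-1$, are always interior and contribute $(2\lambda)^d$ with coefficient $1$, consistently with the claim.)

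The only substantive ingredient is the dichotomy for links of faces in a cubical manifold with boundary: the link of a face lying in the interior is a sphere, and the link of a face lying on the boundary is a ball (of dimension $d-\dim F-1$ in both cases). I expect this to be the main point to pin down or cite carefully — it is precisely the cubical analogue of the familiar fact for simplicial manifolds with boundary, and it is what guarantees that the two copies of the sign $(-1)^{d-\dim F-1}$ cancel in the surviving terms. Once that structural fact is invoked, the corollary is a one-line specialization of Lemma \ref{newII71}.
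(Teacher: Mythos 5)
Your proposal is correct and follows exactly the paper's own argument: the paper likewise specializes Lemma \ref{newII71} by noting that links of boundary faces are balls (so $\widetilde{\chi}=0$) and links of interior faces are spheres of dimension $d-\dim F-1$ (so $\widetilde{\chi}=(-1)^{d-\dim F-1}$, cancelling the prefactor). No differences worth noting.
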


Now we are ready to prove Theorem \ref{cubicalDS}.  

\textit{Proof of Theorem \ref{cubicalDS}:}

Let $\mc{K}$ be a $d$-dimensional cubical manifold with boundary $\partial \mc{K}$.  Define $\check{f}_i(\mc{K}):= f_i(\mc{K})-f_i(\partial\mc{K})$, and define $\check{h}_j\scc(\mc{K})$ by the relation $$\sum_{j=0}^d\check{h}_j\scc(\mc{K})\lambda^j = \sum_{i=0}^d\check{f}_i(\mc{K})(2\lambda)^i(1-\lambda)^{d-i}.$$  Then $\check{h}_j\scc(\mc{K}) = h_j\scc(\mc{K})-g_j\scc(\partial\mc{K})$ for all $0 \leq j \leq d$.  With this notation, Corollary \ref{manifoldII71} says
\begin{eqnarray*}
\sum_{j=0}^dh_j\scc(\mc{K})\lambda^{d-j} &=& \sum_{\substack{F \in \mc{K}-\partial\mc{K} \\ F \neq \emptyset}}(2\lambda)^{\dim F}(1-\lambda)^{d-\dim F} \\
&=& \sum_{i=0}^d\check{f}_i(\mc{K})(2\lambda)^i(1-\lambda)^{d-i} = \sum_{j=0}^d \check{h}_j\scc(\mc{K})\lambda^j.
\end{eqnarray*}
Thus $h_{d-j}\scc(\mc{K}) = \check{h}_j\scc(\mc{K}) = h_j\scc(\mc{K})-g_j\scc(\partial\mc{K})$ for all $0 \leq j \leq d$.

Observe that the defining relation $h_i\scc(\mc{K}) = h_i\cc(\mc{K}) + h_{i+1}\cc(\mc{K})$ gives
\begin{equation}\label{induction}
h_{d+1-j}\cc(\mc{K}) - h_j\cc(\mc{K}) = -\left(h_{d+1-(j-1)}\cc(\mc{K})-h_{j-1}\cc(\mc{K})\right) + h_{d-(j-1)}\scc(\mc{K}) - h_{j-1}\scc(\mc{K}).
\end{equation}
We prove the theorem by induction on $j$.

When $j=1$,
\begin{eqnarray*}
h_d\cc(\mc{K}) - h_1\cc(\mc{K}) &=& -\left(h_{d+1}\cc(\mc{K})-h_0\cc(\mc{K})\right)+h_d\scc(\mc{K})-h_0\scc(\mc{K}) \\
&=&-\left((-2)^d\widetilde{\chi}(\mc{K})-2^d\right)-f_0(\partial\mc{K}) \\
&=& (-1)(-2)^d\widetilde{\chi}(\mc{K})-g_0\cc(\partial\mc{K}).
\end{eqnarray*}
Similarly, for $j>1$, Equation \eqref{induction} and the induction hypothesis give
\begin{eqnarray*}
h_{d+1-j}\cc(\mc{K})-h_j\cc(\mc{K})
&=& -\left((-1)^{j-1}(-2)^d\widetilde{\chi}(\mc{K})-g_{j-1}\cc(\partial\mc{K})\right)-g_{j-1}\scc(\partial\mc{K}) \\
&=& (-1)^{j}(-2)^d\widetilde{\chi}(\mc{K})-g_j\cc(\partial\mc{K}).
\end{eqnarray*}
Here we use the relation $g_{j-1}\scc(\partial\mc{K}) = g_{j}\cc(\partial\mc{K})+g_{j-1}\cc(\partial\mc{K})$ from Equation \eqref{lem35} in the second line.
\hfill $\Box$

\section{Acknowledgements}
I am incredibly grateful to Isabella Novik for a number of insightful conversations during the cultivation of the ideas behind this paper.  I would also like to express sincere thanks to the anonymous referees who provided a number of helpful suggestions to improve the presentation in this paper.  This research was partially supported by a graduate fellowship from VIGRE NSF Grant DMS-0354131 at the University of Washington.

\bibliography{klee.bib}
\bibliographystyle{plain}

\end{document}